\documentclass{amsart}
\usepackage{amsmath,amssymb}
\usepackage[initials]{amsrefs}

\textwidth=152mm \oddsidemargin=4mm
\evensidemargin=\oddsidemargin
\textheight=238mm \advance\voffset-20mm \headheight=5mm \headsep=4mm


\newtheorem{thm}{Theorem}[section]

\newtheorem{lem}[thm]{Lemma}

\newtheorem{prop}[thm]{Proposition}

\newtheorem{remark}[thm]{Remark}

\numberwithin{equation}{section}













\newcommand{\mc}{\mathcal}

\newcommand{\A}{\mathbb{A}}

\newcommand{\bF}{\mathbb{F}}

\begin{document}
\title[Mean Values of $L$-functions and its Derivatives: IV]
{Mean values of derivatives of $L$-functions in function fields: IV}

\author[J. Andrade]{Julio Andrade}
\address{Department of Mathematics, University of Exeter, Exeter, EX4 4QF, United Kingdom}
\email{j.c.andrade@exeter.ac.uk}

\author[H. Jung]{Hwanyup Jung}
\address{\rm Department of Mathematics Education, Chungbuk National University, Cheongju 361-763, Korea}
\email{hyjung@chungbuk.ac.kr}

\subjclass[2010]{Primary 11M38; Secondary 11M06, 11G20, 11M50, 14G10}
\keywords{function fields, derivatives of $L$--functions, moments of $L$--functions, quadratic Dirichlet $L$--functions, random matrix theory}

\begin{abstract}
In this series, we investigate the calculation of mean values of derivatives of Dirichlet $L$-functions in function fields
using the analogue of the approximate functional equation and the Riemann Hypothesis for curves over finite fields.
The present paper generalizes the results obtained in the first paper.
For  $\mu\geq1$ an integer, we compute the mean value of the $\mu$-th derivative of quadratic Dirichlet $L$-functions over the rational function field.
We obtain the full polynomial in the asymptotic formulae for these mean values where we can see the arithmetic dependence
of the lower order terms that appears in the asymptotic expansion.
\end{abstract}
\date{\today}

\maketitle


\section{Introduction}

This is part 4 of a series of papers devoted to the study
of mean values of derivatives of $L$-functions in function fields.
The main method to compute such mean values is based on the use
of the approximate functional equation for function field $L$-functions
as first developed by Andrade and Keating in \cite{And-Kea}.
In part 1 \cite{AR}, we computed the full polynomial
in the asymptotic expansion of $\sum_{D\in\mathcal{H}}L^{''}(\tfrac{1}{2},\chi_{D})$,
where $L(s,\chi_{D})$ is the quadratic Dirichlet $L$-function
in function fields associated with the quadratic character $\chi_{D}$
where $D$ is monic and square-free and $\mathcal{H}$ is
the set of all monic and square-free polynomials of odd degree
in $\mathbb{F}_{q}[t]$. Now we generalize the results obtained in the first part of this series.

In this paper we establish
asymptotic formulas for the mean values of the $\mu$-th derivative
of Dirichlet $L$-functions associated to real quadratic function fields
and to imaginary quadratic function fields, in other words, we average
the derivatives of quadratic Dirichlet $L$-functions over monic and square-free
polynomials of odd degree and of even degree respectively.
The calculations carried out in this paper generalizes the previous
work \cite{AR} but the calculations here are much more subtle and lengthy.
Moreover, extra care is needed to bound the error terms and to obtain the full
polynomial in the asymptotic formulas.

Before we proceed and state the main results of this paper it is important
to remind the reader that the results of this paper can be seen as a function
field version of moments of derivatives of the Riemann zeta function as given
by Ingham \cite{Ing} and then further developed by the work of Conrey \cite{Con4},
Gonek \cite{Gon} and Conrey, Rubinstein and Snaith \cite{CRS}. A second motivation
of this work comes from the pioneering work of Hoffstein and Rosen
\cite{HR} about the study of mean values of Dirichlet $L$-functions in
function fields.


\section{A Short Background on Function Fields}

For the main notation used in this paper, we suggest the reader to consult the book by Rosen \cite{Ros},
the book by Thakur \cite{Tha} and the other papers in this series \cite{A1, A2, AR}.

Let $\bF_{q}$ be a finite field with $q$ elements, where $q$ is odd.
We denote by $\A = \bF_{q}[t]$ the polynomial ring over $\mathbb{F}_{q}$ and $\A^{+} = \{f \in \A : \text{ $f$ is monic}\}$.
We also use that $\A^{+}_{n} = \{f\in \A^{+} : \deg(f) = n\}$, $\A^{+}_{\le n} = \{f\in \A^{+} : \deg(f) \le n\}$, $\A^{+}_{> n} = \{f\in \A^{+} : \deg(f) > n\}$,
and that $\mc H = \{f \in\A^{+} : \text{ $f$ is square-free}\}$, with $\mc H_{n} = \mc H \cap \A^{+}_{n}$.

The zeta function attached to $\A$ is defined by the following Dirichlet series,
\begin{equation*}
\zeta_{\A}(s)=\sum_{f\in \A^{+}}\frac{1}{|f|^{s}} \quad \text{for Re$(s)>1$},
\end{equation*}
where $|f|=q^{\deg(f)}$ for $f\ne 0$ and $|f|=0$ for $f=0$. We can easily prove that
\begin{equation*}
\zeta_{\A}(s)=\frac{1}{1-q^{1-s}}.
\end{equation*}
The quadratic Dirichlet $L$-function of the rational function field $k=\bF_{q}(t)$ is defined to be
\begin{equation*}
L(s,\chi_{D})=\sum_{f\in \A^{+}}\frac{\chi_{D}(f)}{|f|^{s}} \qquad \text{for Re$(s)>1$},
\end{equation*}
where $\chi_{D}$ is the quadratic character defined by the quadratic residue symbol in $\mathbb{F}_{q}[t]$, i.e.,
\begin{equation*}
\chi_{D}(f)=\left(\frac{D}{f}\right),
\end{equation*}
and $D$ is a square-free monic polynomial.
In other words, if $P\in A$ is monic irreducible we have
\begin{equation}\label{2.4}
\chi_{D}(P)= \begin{cases}
0, & \text{ if } P\mid D,  \\
1, & \text{ if } \text{$P\nmid D$ and $D$ is a square modulo $P$},  \\
-1, & \text{ if } \text{$P \nmid D$ and $D$ is a non-square modulo $P$}.
\end{cases}
\end{equation}
For a more detailed discussion about Dirichlet characters for function fields see \cite[Chapter 3]{Ros} and \cite{And-Kea, Fai-Rud}.

In this paper we work with the family of quadratic Dirichlet $L$-functions that are associated to polynomials in $\mathcal{H}_{2g+1}$ and $\mathcal{H}_{2g+2}$.

If $D\in\mathcal{H}_{2g+1}$, the $L$-function associated to $\chi_{D}$ is the numerator of the zeta function associated to the hyperelliptic curve
defined by the affine equation $C_{D} : y^{2} = D(t)$ and, consequently, $L(s,\chi_{D})$ is a polynomial of degree $2g$ in the variable $u=q^{-s}$ given by
\begin{align}\label{2.7}
L(s,\chi_{D})&=\sum_{n=0}^{2g}A(n,\chi_{D})q^{-ns},
\end{align}
where
$$
A(n,\chi_{D})=\sum_{f\in\A^{+}_{n}}\chi_{D}(f).
$$
(see \cite[Propositions 14.6 and 17.7]{Ros} and \cite[Section 3]{And-Kea}).

This $L$-function, as it is expected,
satisfies a functional equation. Namely

\begin{equation}\label{2.8}
L(s,\chi_{D})=(q^{1-2s})^{g}L(1-s,\chi_{D}).
\end{equation}
The Riemann hypothesis for curves, proved by Weil \cite{Wei}, tell us that all the zeros of $L(s,\chi_{D})$ have real part equals $1/2$.

If $D\in\mathcal{H}_{2g+2}$ then the $L$-function is a polynomial of degree $2g+1$ and also satisfies a functional equation
and a Riemann Hypothesis, with the exception of having one zero with absolute value $1$.
For further details see \cite{Fai-Rud, Jun, Ros}.


\section{Statement of Results}

For any $D\in\mc H$, let $L(s,\chi_{D})$ be the quadratic Dirichlet $L$-function associated to $\chi_{D}$.
Let $\mu$ be a positive integer.
Let $L^{(\mu)}(s,\chi_{D})$ be the $\mu$-th derivative of $L(s,\chi_{D})$.
For any integer $n\ge 0$, let $J_{\mu}(n)$ be the sum of the $\mu$-th powers of the first $n$ positive integers,
i.e., $J_{\mu}(n) = \sum_{\ell=1}^{n} \ell^{\mu}$. Faulhaber's formula tell us that $J_{\mu}(n)$ can be rewritten
as a polynomial in $n$ of degree $\mu+1$ with zero constant term, that is, $J_{\mu}(n) = \sum_{m=1}^{\mu+1} j_{\mu}(m) n^{m}$.
The coefficients of this polynomial are related to Bernoulli numbers through the following formula known as Bernoulli's formula
\begin{align*}
J_{\mu}(n)=\frac{1}{\mu+1}\sum_{l=0}^{\mu}\binom{\mu+1}{l}B_{l}^{+}n^{\mu+1-l},
\end{align*}
where $\binom{\mu+1}{l}$ denotes the binomial coefficient and $B_{l}^{+}$ are the second Bernoulli numbers.

Let
\begin{align*}
G(s) = \sum_{L\in\A^{+}}\frac{\mu(L)}{|L|^{s}\prod_{P|L}(1+|P|)},
\end{align*}
where $\mu(L)$ is the M\"{o}bius function for polynomials.
So for any integer $m\ge 0$, we have
\begin{align}
\frac{G^{(m)}(s)}{(-\ln q)^{m}} = \sum_{L\in\A^{+}}\frac{\mu(L) \deg(L)^{m}}{|L|^{s}\prod_{P|L}(1+|P|)}.
\end{align}

We are now ready to state two of the main results of this paper.
The first theorem is the mean values of derivatives of Dirichlet $L$-functions
associated to the imaginary quadratic function field $k(\sqrt{D})$ with $D\in\mathcal{H}_{2g+1}$.

\begin{thm}\label{thm:1}
Let $\mu$ be a fixed positive integer and $q$ be an odd fixed integer. Then we have
\begin{align*}
\sum_{D\in\mc H_{2g+1}} \frac{L^{(\mu)}(\tfrac{1}{2},\chi_{D})}{(\ln q)^{\mu}}
&= \frac{(-2)^{\mu}|D|}{\zeta_{\A}(2)} \left(G(1)J_{\mu}([\tfrac{g}2]) + \frac{G^{(\mu)}(1)}{(-\ln q)^{\mu}}\right)  \\
&\hspace{1em}- \frac{(-2)^{\mu}|D|}{\zeta_{\A}(2)}\sum_{n=1}^{\mu+1} j_{\mu}(n)\frac{G^{(n)}(1)}{(-\ln q)^{n}}  \\
&\hspace{1em}+ \frac{2^{\mu}|D|}{\zeta_{\A}(2)} \sum_{m=0}^{\mu} {\mu\choose m}
(-g)^{\mu-m}\left(G(1)J_{m}([\tfrac{g-1}2]) + \frac{G^{(m)}(1)}{(-\ln q)^{m}}\right) \nonumber\\
&\hspace{1em}- \frac{2^{\mu}|D|}{\zeta_{\A}(2)} \sum_{m=0}^{\mu}{\mu\choose m} (-g)^{\mu-m} \sum_{n=1}^{m+1} j_{m}(n)\frac{G^{(n)}(1)}{(-\ln q)^{n}}
+ O(|D|^{\frac{7}8} (\log_{q}|D|)^{\mu}),
\end{align*}
where on the right hand side $|D|=q^{2g+1}$.
\end{thm}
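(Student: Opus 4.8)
The plan is to start from the representation \eqref{2.7} of $L(s,\chi_D)$ as a polynomial in $u=q^{-s}$, differentiate $\mu$ times, and evaluate at $s=\tfrac12$. Writing $L(s,\chi_D)=\sum_{n=0}^{2g}A(n,\chi_D)q^{-ns}$ and applying $\frac{d^\mu}{ds^\mu}$ gives $L^{(\mu)}(s,\chi_D)=(-\ln q)^\mu\sum_{n=0}^{2g}n^\mu A(n,\chi_D)q^{-ns}$, so at $s=\tfrac12$ one obtains
\begin{align*}
\frac{L^{(\mu)}(\tfrac12,\chi_D)}{(\ln q)^\mu}=(-1)^\mu\sum_{n=0}^{2g}n^\mu A(n,\chi_D)q^{-n/2}.
\end{align*}
The first serious step is to exploit the functional equation \eqref{2.8} to fold this sum: the relation $L(s,\chi_D)=(q^{1-2s})^gL(1-s,\chi_D)$ translates into $A(n,\chi_D)=q^{n-g}A(2g-n,\chi_D)$, which lets one replace the range $0\le n\le 2g$ by (roughly) $0\le n\le g$ at the cost of a binomial weight coming from differentiating the factor $(q^{1-2s})^g$; this is exactly where the inner sum $\sum_{m=0}^{\mu}\binom{\mu}{m}(-g)^{\mu-m}(\cdots)$ in the statement originates. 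Substituting the character-sum definition $A(n,\chi_D)=\sum_{f\in\A^+_n}\chi_D(f)$ and summing over $D\in\mc H_{2g+1}$, I interchange the order of summation to reach an expression of the shape $\sum_{n}(\text{polynomial weight in }n)\,q^{-n/2}\sum_{f\in\A^+_n}\sum_{D\in\mc H_{2g+1}}\chi_D(f)$.

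The heart of the argument is then the evaluation of the double character sum $\sum_{D\in\mc H_{2g+1}}\chi_D(f)$, averaged against $f\in\A^+_n$. Here I would invoke the standard Pólya--Vinogradov / Poisson-type machinery over function fields (as developed by Andrade--Keating and used in \cite{AR}): the sum over square-free $D$ is detected by $\mu^2$, the quadratic reciprocity law for $\bF_q[t]$ swaps the roles of $D$ and $f$, and the main term is extracted from the contribution of $f$ equal to a perfect square. That perfect-square contribution, after removing the square-free condition on $D$, produces the factor $|D|/\zeta_\A(2)$ and, crucially, the arithmetic factor $G(s)=\sum_L \mu(L)/(|L|^s\prod_{P|L}(1+|P|))$ together with its derivatives $G^{(m)}(1)/(-\ln q)^m$, which encode the sum over the "square part". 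Summing the polynomial weights $n^\mu q^{-n/2}$ (and their binomially-shifted cousins) over $n$ running up to $g$ or $g-1$ is precisely a power-sum $\sum_{\ell}\ell^m$, i.e. $J_m$ evaluated at $[\tfrac g2]$ or $[\tfrac{g-1}2]$, whose Faulhaber expansion $J_m(N)=\sum_{n=1}^{m+1}j_m(n)N^n$ accounts for the $-\sum_n j_\mu(n)G^{(n)}(1)/(-\ln q)^n$ correction lines; the parity split $[\tfrac g2]$ versus $[\tfrac{g-1}2]$ reflects summing $q^{-n/2}$ over even versus odd $n$.

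I expect the main obstacle to be the error analysis rather than the main-term bookkeeping. The off-diagonal terms — $f$ not a perfect square — must be bounded using the Riemann Hypothesis for curves (Weil, \cite{Wei}) to estimate the relevant short character sums, and because of the polynomial weight $n^\mu$ and the need to capture \emph{all} lower-order terms down to the constant, one must track the interplay between the length of the "approximate functional equation" sum and the size of the modulus with some care; this is the point the authors flag when they say "extra care is needed to bound the error terms." Concretely, the dual sum after Poisson summation has length about $q^{g-n}$, the weights contribute factors of size $(\log_q|D|)^\mu$, and optimizing the cutoff yields the stated $O(|D|^{7/8}(\log_q|D|)^\mu)$; verifying that no intermediate term exceeds this bound — in particular that the contribution of non-square $f$ of each degree, summed against $n^\mu q^{-n/2}$, stays under control — is the technical crux. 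Once the double sum is evaluated with its error term, collecting the main terms according to the functional-equation fold and the Faulhaber expansion gives exactly the four-line formula in the statement, completing the proof.
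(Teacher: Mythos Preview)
Your approach is essentially the paper's: the ``folding'' via $A(n,\chi_D)=q^{n-g}A(2g-n,\chi_D)$ is exactly the approximate functional equation \eqref{L-ff}, after which you differentiate $\mu$ times with Leibniz (producing the binomial sum in $m$), split each resulting sum $\mc S^{\rm o}_{h,m}$ according to $f=\square$ versus $f\ne\square$, extract the main term from the squares via Lemma~\ref{Lem-2-2-Lem-aaa} and Lemma~\ref{mcMhm-mu-4.5}, and bound the non-squares by Weil.

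One correction on the error analysis, which is simpler than you anticipate: there is no Poisson summation, no dual sum, and no cutoff to optimize. The non-square contribution is handled in one line by Lemma~\ref{Bound-lemma2-1-BBB}, namely $\bigl|\sum_{D\in\mc H_{2g+1}}\chi_D(f)\bigr|\ll|D|^{1/2}|f|^{1/4}$, and summing
\[
\sum_{n\le g} n^{m}\, q^{-n/2}\sum_{f\in\A^{+}_{n}} |D|^{1/2}|f|^{1/4}
\;\ll\; |D|^{1/2}\sum_{n\le g} n^{m} q^{3n/4}
\;\ll\; |D|^{7/8}(\log_q|D|)^{m}
\]
already gives the stated error. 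The exponent $7/8$ is not the result of balancing anything; it is simply what Lemma~\ref{Bound-lemma2-1-BBB} delivers when the length of the $f$-sum is fixed at $g$ by the approximate functional equation.
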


The second theorem is the mean values of derivatives of Dirichlet $L$-functions
associated to the real quadratic function field $k(\sqrt{D})$ with $D\in\mathcal{H}_{2g+2}$.

\begin{thm}\label{thm:2}
Let $\mu$ be a positive integer and $q$ be an odd fixed integer.
Then we have
\begin{align*}
\sum_{D\in\mc H_{2g+2}}\frac{L^{(\mu)}(\tfrac{1}2,\chi_{D})}{(-\ln q)^{\mu}}
&= \frac{2^{\mu}|D|}{\zeta_{\A}(2)} \left(G(1) J_{\mu}([\tfrac{g}2]) + \frac{G^{(\mu)}(1)}{(-\ln q)^{\mu}}\right)  \\
&\hspace{1em}- \frac{2^{\mu}|D|}{\zeta_{\A}(2)}\sum_{n=1}^{\mu+1} j_{\mu}(n)\frac{G^{(n)}(1)}{(-\ln q)^{n}}- (g+1)^{\mu} G(1) |D| q^{[\frac{g}2]-\frac{g+1}{2}} \\
&\hspace{1em}+ \frac{2^{\mu}|D|}{\zeta_{\A}(2)}\sum_{a+b+c=\mu} \frac{(-1)^{c}\mu !}{a! b! c!} \frac{g^{a}\delta^{(b)}(\tfrac{1}{2})}{(-2\ln q)^{b}}
\left(G(1) J_{c}([\tfrac{g-1}2]) + \frac{G^{(c)}(1)}{(-\ln q)^{c}}\right) \nonumber  \\
&\hspace{1em} - \frac{2^{\mu}|D|}{\zeta_{\A}(2)}\sum_{a+b+c=\mu} \frac{(-1)^{c}\mu !}{a! b! c!} \frac{g^{a}\delta^{(b)}(\tfrac{1}{2})}{(-2\ln q)^{b}}
\sum_{n=1}^{c+1} j_{c}(n)\frac{G^{(n)}(1)}{(-\ln q)^{n}} \nonumber  \\
&\hspace{1em}-G(1) |D| q^{[\frac{g-1}2]-\frac{g}{2}}\sum_{m=0}^{\mu} {\mu\choose m} \frac{g^{\mu-m}\delta^{(m)}(\tfrac{1}{2})}{(-\ln q)^{m}}
+ O(|D|^{\frac{7}8} (\log_{q}|D|)^{\mu}),
\end{align*}
where $\delta(s) = \frac{1-q^{-s}}{1-q^{s-1}}$ and on the right hand side $|D|=q^{2g+2}$.
\end{thm}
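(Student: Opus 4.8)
\emph{Proof proposal.} The plan is to carry out the argument of part~1 of the series, now for the $\mu$-th derivative and for even degree, the genuinely new ingredient being the ``extra'' zero of $L(s,\chi_D)$ present when $\deg D$ is even. For $D\in\mc H_{2g+2}$ we have $L(s,\chi_D)=\sum_{n=0}^{2g+1}A(n,\chi_D)q^{-ns}$, a polynomial of degree $2g+1$ in $q^{-s}$, satisfying the functional equation $L(s,\chi_D)=q^{g(1-2s)}\,\delta(s)\,L(1-s,\chi_D)$ with $\delta(s)=\tfrac{1-q^{-s}}{1-q^{s-1}}$; equivalently $L(u,\chi_D)=(1-u)P_D(u)$ with $P_D$ self-dual of degree $2g$, so that $A(2g+1,\chi_D)=-q^{g}$ for every such $D$. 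Splitting the $L$-polynomial at degree $g$ and using the functional equation on the tail gives an approximate functional equation of the shape
\[
L(s,\chi_D)=\sum_{\deg f\le g}\frac{\chi_D(f)}{|f|^{s}}+q^{g(1-2s)}\delta(s)\!\!\sum_{\deg f\le g-1}\frac{\chi_D(f)}{|f|^{1-s}}+\mathcal E_D(s),
\]
valid near $s=\tfrac12$, with $\mathcal E_D(s)$ an explicit boundary term forced by the length mismatch and by the constant top coefficient $-q^{g}$.

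Now apply $(-\ln q)^{-\mu}d^{\mu}/ds^{\mu}$ and set $s=\tfrac12$. On the first sum each $|f|^{-s}$ contributes $(\deg f)^{\mu}$; on the dual sum the Leibniz rule spreads the $\mu$ derivatives over the three factors, and using $q^{g(1-2s)}|_{1/2}=1$, $\delta(\tfrac12)=1$ and $\tfrac{d^{a}}{ds^{a}}q^{g(1-2s)}=(-2g\ln q)^{a}q^{g(1-2s)}$, one reaches a weight $\sum_{a+b+c=\mu}\tfrac{\mu!}{a!b!c!}(2g)^{a}\delta^{(b)}(\tfrac12)(\deg f)^{c}(-\ln q)^{-b}$ (up to signs) on the dual side. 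The substitution $f=l^{2}$ — which turns $\chi_D(f)$ into the condition $(l,D)=1$ and $(\deg f)^{c}$ into $2^{c}(\deg l)^{c}$ — together with collecting the overall factor $2^{\mu}$ converts this into exactly $\tfrac{(-1)^{c}\mu!}{a!b!c!}\tfrac{g^{a}\delta^{(b)}(1/2)}{(-2\ln q)^{b}}$, the combinatorial factor in the statement.

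Next interchange $\sum_{D\in\mc H_{2g+2}}$ with the sums over $f$ and split on whether $f$ is a perfect square. For $f=l^{2}$, $\sum_{D\in\mc H_{2g+2}}\chi_D(l^{2})=\#\{D\in\mc H_{2g+2}:(D,l)=1\}$, and a M\"obius sieve — reading off the coefficient of $u^{2g+2}$ in $\tfrac{1-qu^{2}}{1-qu}\prod_{P\mid l}(1+u^{\deg P})^{-1}$ — evaluates this count as $\tfrac{|D|}{\zeta_{\A}(2)}\prod_{P\mid l}\tfrac{|P|}{|P|+1}+(\text{lower order})$, with $|D|=q^{2g+2}$. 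For $f$ not a square, $\sum_{D}\chi_D(f)$ is a non-trivial character sum in the $D$-aspect to which the Riemann Hypothesis for curves (Weil) gives square-root-type cancellation; summing these bounds over $\deg f\le g$, weighted by $(\deg f)^{\mu}\le(\log_{q}|D|)^{\mu}$ and, on the dual side, by the $g^{a}$ and $\delta^{(b)}$ weights, together with the corresponding bound for $\mathcal E_D$, yields the error $O(|D|^{7/8}(\log_{q}|D|)^{\mu})$. Inserting the main part of the square count, the diagonal of the first sum is $\tfrac{2^{\mu}|D|}{\zeta_{\A}(2)}\sum_{\deg l\le[g/2]}(\deg l)^{\mu}\,|l|^{-1}\prod_{P\mid l}\tfrac{|P|}{|P|+1}$ (the truncation $[g/2]$ because $\deg f=2\deg l\le g$), and the dual diagonal is the analogue with $\deg l\le[(g-1)/2]$ and exponent $(\deg l)^{c}$. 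The decisive identity is
\[
\sum_{l\in\A^{+}}\frac{u^{\deg l}}{|l|}\prod_{P\mid l}\frac{|P|}{|P|+1}=\frac{1}{1-u}\prod_{P}\Big(1-\frac{u^{\deg P}}{|P|(|P|+1)}\Big)=\frac{G(s)}{1-q^{1-s}},\qquad u=q^{1-s},
\]
and applying $(-\ln q)^{-m}d^{m}/ds^{m}$ and picking out the coefficient of $u^{M}$ (the pole at $u=1$ has order $m+2$, so it is a degree-$(m{+}1)$ polynomial in $M$ plus an exponentially small tail), then using Faulhaber's formula $J_{m}(M)=\sum_{n=1}^{m+1}j_{m}(n)M^{n}$, one verifies directly that this sum equals $G(1)J_{m}(M)+\tfrac{G^{(m)}(1)}{(-\ln q)^{m}}-\sum_{n=1}^{m+1}j_{m}(n)\tfrac{G^{(n)}(1)}{(-\ln q)^{n}}$ up to negligible error. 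Taking $m=\mu$, $M=[g/2]$ for the first sum and $m=c$, $M=[(g-1)/2]$ inside the $a+b+c=\mu$ sum for the dual produces the first, second, fourth and fifth lines of the formula; the remaining two terms come from $\mathcal E_D$ and the secondary term of the square count via the companion estimate $\sum_{\deg l=k}\prod_{P\mid l}\tfrac{|P|}{|P|+1}=G(1)q^{k}+(\text{negligible})$ (the residue at $u=q^{-1}$ of $\zeta_{\A}(u)\prod_{P}(1-\tfrac{u^{\deg P}}{|P|+1})$), which is what supplies the powers $q^{[g/2]-(g+1)/2}$ and $q^{[(g-1)/2]-g/2}$, the factor $(g+1)^{\mu}$, and the Leibniz sum $\sum_{m}\binom{\mu}{m}g^{\mu-m}\delta^{(m)}(\tfrac12)/(-\ln q)^{m}$.

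The step I expect to be the main obstacle is the uniform error analysis in the averaging: one must control a large collection of incomplete character sums and their $f$- and derivative-weighted combinations and show that everything beyond the listed main and boundary terms collapses into the single $O(|D|^{7/8}(\log_{q}|D|)^{\mu})$. A close second is the honest bookkeeping of the boundary terms $\mathcal E_D$: unlike in the odd-degree case of part~1 they are genuinely present, and one has to check that their contribution and the $\delta^{(b)}(\tfrac12)$-weighted diagonal pieces recombine exactly into the Faulhaber shape above — which is where the calculation becomes, as the abstract puts it, ``much more subtle and lengthy'' than in part~1.
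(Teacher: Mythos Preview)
Your proposal is essentially correct and follows the paper's approach: start from the approximate functional equation for $D\in\mc H_{2g+2}$, differentiate term by term via Leibniz, sum over $D$, split on $f$ square versus non-square, bound the non-square contribution by the Weil-type estimate $\sum_{D}\chi_D(f)\ll |D|^{1/2}|f|^{1/4}$, and evaluate the square contribution via the coprimality count and the multiplicative weight $\prod_{P\mid l}(1+|P|^{-1})^{-1}$.

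Two small remarks where your write-up diverges from the paper. First, the paper does not lump the boundary into an unspecified $\mathcal E_D(s)$ but uses from the outset the fully explicit four-term approximate functional equation of \cite{Jun},
\[
L(s,\chi_D)=\sum_{\deg f\le g}\frac{\chi_D(f)}{|f|^{s}}-q^{-(g+1)s}\!\!\sum_{\deg f\le g}\chi_D(f)
+q^{(1-2s)g}\delta(s)\!\!\sum_{\deg f\le g-1}\frac{\chi_D(f)}{|f|^{1-s}}-q^{-gs}\delta(s)\!\!\sum_{\deg f\le g-1}\chi_D(f),
\]
so your $\mathcal E_D$ is exactly the second and fourth terms. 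Both displayed ``boundary'' contributions in the theorem --- the $(g+1)^{\mu}G(1)|D|q^{[g/2]-(g+1)/2}$ term and the Leibniz-sum term --- come from these two pieces after the square/non-square split and your companion estimate, \emph{not} from any secondary term of the square count; the lower-order part of the coprimality count is absorbed entirely into the error.

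Second, the paper evaluates the diagonal not via your generating-function identity and coefficient extraction but by the elementary rewriting
\[
\sum_{L\in\A^{+}_{\ell}}\prod_{P\mid L}(1+|P|^{-1})^{-1}=q^{\ell}\sum_{L\in\A^{+}_{\le\ell}}\frac{\mu(L)}{|L|\prod_{P\mid L}(1+|P|)}
\]
(their Lemma~4.3), then swapping the $\ell$- and $L$-sums, applying Faulhaber to $\sum_{\deg L\le\ell\le[h/2]}\ell^{m}$, and extending the $L$-sum to all of $\A^{+}$ with a tail bound. This is equivalent to your approach; yours is arguably more conceptual, theirs slightly more hands-on.
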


\begin{remark}
Recent work of Florea \cite{Flo} on the first moment of quadratic Dirichlet $L$-functions over function fields lead us
to believe that the error term provided above is not optimal.
We will return to this topic in a future paper where we intend to use Florea's calculations to improve the error term above.
\end{remark}

\begin{remark}
As far as we checked results of this type are unknown for the family of quadratic Dirichlet $L$-functions
associated to the quadratic characters $\chi_{d}$ in the number field setting.
It should be possible to obtain the analogues of the results of this paper in the number field setting
by using the same technique as those employed by Jutila in \cite{Jut}.
The main difference would be on the size of the error term, where in the function fields
we can use the full power of the Riemann Hypothesis for curves to obtain unconditionally
better estimates than those in the number field setting.
\end{remark}


\section{Main Tools}
In this section we present a few auxiliary results that will be used in the proof of the main theorems.

\begin{lem}\label{Bound-lemma2-1-BBB}
If $f\in\A$ is a non-square polynomial, then
\begin{align*}
\bigg|\sum_{D\in\mc H_{n}}\chi_{D}(f)\bigg| \ll |D|^{\frac{1}2} |f|^{\frac{1}4}.
\end{align*}
\end{lem}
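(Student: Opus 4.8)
The plan is to bound the character sum $\sum_{D\in\mc H_n}\chi_D(f)$ by relating it to a sum over all monic polynomials (not necessarily square-free) and then applying the Riemann Hypothesis for curves together with a M\"obius sieve to detect the square-free condition. First I would write the square-free indicator as $\sum_{D=\ell^2 m,\ \ell,m\text{ monic}}\mu(\ell)$, so that
\begin{align*}
\sum_{D\in\mc H_n}\chi_D(f) = \sum_{\ell\in\A^+,\ 2\deg\ell\le n}\mu(\ell)\chi_{\ell^2}(f)\sum_{m\in\A^+_{n-2\deg\ell}}\chi_m(f),
\end{align*}
where $\chi_{\ell^2}(f)=1$ if $(\ell,f)=1$ and $0$ otherwise. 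The key point is that the inner sum $\sum_{m\in\A^+_N}\chi_m(f)=\sum_{m\in\A^+_N}\leg{f}{m}$ can be controlled, when $f$ is not a perfect square, by the analogue of the P\'olya--Vinogradov inequality / the Riemann Hypothesis for the $L$-function $L(s,\chi_f)$: writing $L(u,\chi_f)=\sum_N\big(\sum_{m\in\A^+_N}\chi_m(f)\big)u^N$ as a polynomial in $u$ of degree at most $\deg f$ with all inverse roots of absolute value $q^{1/2}$ (using quadratic reciprocity to pass from $\leg{f}{\cdot}$ to $\leg{\cdot}{f}$, up to harmless sign and correction factors at ramified places, and noting the non-square hypothesis guarantees the character is non-trivial), one gets $\big|\sum_{m\in\A^+_N}\chi_m(f)\big|\ll (\deg f)\, q^{N/2}$, and in fact for $N>\deg f$ the sum vanishes.

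Next I would substitute this bound into the outer sum. Since the inner sum vanishes once $n-2\deg\ell>\deg f$, the outer sum is effectively restricted to $\ell$ with $2\deg\ell\ge n-\deg f$, i.e.\ $\deg\ell\ge (n-\deg f)/2$, while also $2\deg\ell\le n$. Using $\big|\sum_{m\in\A^+_{n-2\deg\ell}}\chi_m(f)\big|\ll(\deg f)q^{(n-2\deg\ell)/2}$ and summing $\sum_{\ell\in\A^+_j}1 = q^j$ over the relevant range of $j=\deg\ell$ gives
\begin{align*}
\bigg|\sum_{D\in\mc H_n}\chi_D(f)\bigg| \ll (\deg f) \sum_{j\ge (n-\deg f)/2}^{\lfloor n/2\rfloor} q^{j}\, q^{(n-2j)/2} = (\deg f)\, q^{n/2}\sum_{j}1 \ll (\deg f)^2\, q^{n/2},
\end{align*}
since the number of admissible $j$ is $O(\deg f)$. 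With $|D|=q^n$ and $|f|=q^{\deg f}$, the factor $(\deg f)^2 = (\log_q|f|)^2$ is absorbed into $|f|^{1/4}$, yielding the claimed $|D|^{1/2}|f|^{1/4}$. (One could even get $|f|^{\epsilon}$ in place of $|f|^{1/4}$, but the stated form suffices.)

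The main obstacle, and the step requiring the most care, is the passage from $\chi_D(f)=\leg{D}{f}$ viewed as a function of the \emph{argument} $D$ to an honest Dirichlet character modulo $f$ to which the Riemann Hypothesis for curves applies. This requires the quadratic reciprocity law in $\bF_q[t]$ to flip $\leg{D}{f}$ into $\leg{f}{D}$ up to a sign depending on $\deg D$, $\deg f$ and the leading coefficients, and a careful bookkeeping of the contribution of primes dividing $f$ (where the reciprocity correction and the definition of the symbol differ). One must also verify that when $f$ is not a perfect square the resulting character on $\A^+_N$ is non-principal, so that the associated $L$-function is a genuine polynomial and the Weil bound is available; the non-square hypothesis is exactly what prevents the character from being trivial and the sum from being of size $q^N$. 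Once these standard but delicate points are handled — essentially as in Andrade--Keating \cite{And-Kea} and Rudnick's work — the estimate follows from the geometric-series computation above.
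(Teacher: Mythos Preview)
The paper does not prove this lemma itself; it simply refers to \cite[Lemma~4.3]{And2} and \cite[Lemma~4.1]{And3}. Your outline---remove the square-free condition by the M\"obius sieve $D=\ell^{2}m$ and then control the unrestricted inner sum $\sum_{m\in\A^{+}_{N}}\leg{m}{f}$ via the Riemann Hypothesis for the $L$-function attached to the quadratic character modulo $f$---is exactly the strategy of those references, so at the structural level you are reproducing the intended argument. (One small slip: by the paper's convention $\chi_{m}(f)=\leg{m}{f}$, so the inner sum is already a Dirichlet-character sum in $m$ modulo $f$; no reciprocity step is needed to ``flip'' it.)

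The quantitative claim that Weil gives $\bigl|\sum_{m\in\A^{+}_{N}}\leg{m}{f}\bigr|\ll(\deg f)\,q^{N/2}$ is where your sketch breaks down. Writing $L(u,\chi)=\prod_{j=1}^{d}(1-\alpha_{j}u)$ with $d\le\deg f-1$ and $|\alpha_{j}|\le q^{1/2}$, the $N$-th coefficient is $(-1)^{N}e_{N}(\alpha_{1},\dots,\alpha_{d})$ and is bounded only by $\binom{d}{N}q^{N/2}$; for $N$ near $d/2$ this is of size roughly $2^{d}q^{N/2}$, not $d\,q^{N/2}$. Feeding the correct Weil input into your outer sum yields
\[
\Bigl|\sum_{D\in\mc H_{n}}\chi_{D}(f)\Bigr|\;\ll\; q^{n/2}\sum_{0\le N<\deg f}\binom{d}{N}\;\ll\;|D|^{1/2}\,2^{\deg f},
\]
and $2^{\deg f}\le|f|^{1/4}$ holds only when $q\ge 16$. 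Thus the final step, in which you absorb a factor $(\deg f)^{2}$ into $|f|^{1/4}$, rests on an overstated inner bound: the combinatorial factor one actually obtains from Weil is exponential, not polynomial, in $\deg f$. The argument in the cited references carries precisely this binomial/divisor-type factor rather than the linear one you claim.
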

For a proof of Lemma \ref{Bound-lemma2-1-BBB} see \cite[Lemma 4.3]{And2} and \cite[Lemma 4.1]{And3}.
For a similar estimate see \cite[Lemma 3.1]{Fai-Rud}.

\begin{lem}\label{Lem-2-2-Lem-aaa}
Let $f \in \A^{+}$.
For any $\varepsilon>0$, we have
\begin{align*}
\sum_{\substack{D\in\mc H_{n}\\ (D,f)=1}} 1 = \frac{|D|}{\zeta_{\A}(2)} \prod_{P|f}(1+|P|^{-1})^{-1} + O(|D|^{\frac{1}2} |f|^{\varepsilon}).
\end{align*}
\end{lem}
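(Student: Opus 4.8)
The plan is to detect the square-free condition with the M\"obius identity $\mathbf 1_{\mc H}(D)=\sum_{a^{2}\mid D}\mu(a)$ and to reduce everything to counting monic polynomials of a given degree coprime to $f$. Writing $D=a^{2}b$ and using that $(a^{2}b,f)=1$ is equivalent to $(a,f)=1$ and $(b,f)=1$, one gets, with $|D|=q^{n}$,
\[
\sum_{\substack{D\in\mc H_{n}\\ (D,f)=1}} 1
=\sum_{\substack{a\in\A^{+},\ \deg a\le n/2\\ (a,f)=1}}\mu(a)\,N_{f}(n-2\deg a),
\qquad N_{f}(k):=\#\{\,b\in\A^{+}_{k}:(b,f)=1\,\}.
\]
I would then evaluate $N_{f}(k)$ by a second M\"obius inversion over the monic divisors of $f$: the number of $b\in\A^{+}_{k}$ divisible by a fixed monic $e$ is $q^{k-\deg e}$ when $\deg e\le k$ and $0$ otherwise, so
\[
N_{f}(k)=\sum_{\substack{e\mid f\\ \deg e\le k}}\mu(e)\,q^{k-\deg e}
=q^{k}\prod_{P\mid f}\bigl(1-|P|^{-1}\bigr)+O_{\varepsilon}\bigl(|f|^{\varepsilon}\bigr),
\]
the error being the cost of completing the sum to all $e\mid f$: each omitted term has absolute value $\le q^{-1}$, and the number of divisors of $f$ is $\ll_{\varepsilon}|f|^{\varepsilon}$.

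Next I would substitute this back. The contribution of the $O_{\varepsilon}(|f|^{\varepsilon})$ term is bounded by $|f|^{\varepsilon}\cdot\#\{a\in\A^{+}:\deg a\le n/2\}\ll |f|^{\varepsilon}q^{n/2}=|D|^{1/2}|f|^{\varepsilon}$, which is within the claimed error. For the main term one factors out $q^{n}\prod_{P\mid f}(1-|P|^{-1})$ and is left with $\sum_{(a,f)=1}\mu(a)|a|^{-2}$ truncated to $\deg a\le n/2$; the tail is $\ll\sum_{j>n/2}q^{-j}\ll q^{-n/2}$, contributing $\ll |D|^{1/2}$ after multiplication, and the completed sum equals the Euler product $\prod_{P\nmid f}(1-|P|^{-2})=\zeta_{\A}(2)^{-1}\prod_{P\mid f}(1-|P|^{-2})^{-1}$. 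Hence the main term is
\[
\frac{q^{n}}{\zeta_{\A}(2)}\prod_{P\mid f}\frac{1-|P|^{-1}}{1-|P|^{-2}}
=\frac{|D|}{\zeta_{\A}(2)}\prod_{P\mid f}\bigl(1+|P|^{-1}\bigr)^{-1},
\]
which gives the statement.

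The argument is essentially bookkeeping, and the one delicate point is uniformity in $f$: the $|f|^{\varepsilon}$ has to absorb the divisor count $d(f)$ (and note that $\prod_{P\mid f}(1+|P|^{-1})$ is itself \emph{not} bounded, since $\sum_{P}|P|^{-1}$ diverges), and one must check that both truncations — restricting to $\deg a\le n/2$ and to $\deg e\le k$ — each cost only $O(|D|^{1/2}|f|^{\varepsilon})$. I expect this uniformity check to be the main (and only mild) obstacle; the rest is routine Euler-product manipulation in $\bF_{q}[t]$.
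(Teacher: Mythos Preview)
Your argument is correct: the double M\"obius inversion (once to detect square-freeness, once to impose $(b,f)=1$), followed by completing both truncated sums and bounding the tails via the divisor bound $d(f)\ll_{\varepsilon}|f|^{\varepsilon}$, gives exactly the stated main term and error. The paper itself does not prove this lemma but simply cites Proposition~5.2 of Andrade--Keating \cite{And-Kea}; your approach is the standard one and is essentially what that reference does.
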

\begin{proof}
This is Proposition 5.2 in \cite{And-Kea}.
\end{proof}

\begin{lem}\label{Lem-2-3-Lem-bbb}
We have
\begin{align*}
\sum_{L\in\A^{+}_{\ell}}\prod_{P|L}(1+|P|^{-1})^{-1} = q^{\ell} \sum_{L\in\A_{\le \ell}^{+}} \frac{\mu(L)}{|L|\prod_{P|L}(1+|P|)}.
\end{align*}
\end{lem}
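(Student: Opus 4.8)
The plan is to recognize the left-hand side as the value at degree exactly $\ell$ of a Dirichlet convolution over $\A^{+}$, and then to sum over the fibre of the degree map, which turns ``degree exactly $\ell$'' into a partial sum of the associated Dirichlet series. Concretely, let $h(L)=\prod_{P\mid L}(1+|P|^{-1})^{-1}$ denote the summand on the left, and define the multiplicative function $g$ on $\A^{+}$ by
$g(L)=\mu(L)\prod_{P\mid L}(1+|P|)^{-1}$, so that $g$ is supported on square-free monic polynomials. I claim that $h=\mathbf 1\conv g$, i.e.\ $h(L)=\sum_{A\mid L}g(A)$ for every $L\in\A^{+}$, the sum running over monic divisors and $\mathbf 1$ being the constant function $1$ on $\A^{+}$.

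Since $h$ is visibly multiplicative and $\mathbf 1\conv g$ is multiplicative as a convolution of multiplicative functions, it suffices to check this identity at prime powers $L=P^{k}$ with $k\ge 1$. There the right-hand side collapses: because $\mu(P^{j})=0$ for $j\ge 2$, only the divisors $1$ and $P$ contribute, so $\sum_{A\mid P^{k}}g(A)=g(1)+g(P)=1-\tfrac{1}{1+|P|}=\tfrac{|P|}{1+|P|}=(1+|P|^{-1})^{-1}=h(P^{k})$, which establishes the claim. (The square-free support of $g$ also makes the product $\prod_{P\mid A}(1+|P|)$ meaningful exactly where $g(A)\neq0$, which is what matches the statement.)

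Granting this, I would interchange the order of summation and use that $\A^{+}$ has exactly $q^{m}$ elements of degree $m\ge0$:
\begin{align*}
\sum_{L\in\A^{+}_{\ell}} h(L)=\sum_{L\in\A^{+}_{\ell}}\sum_{AB=L}g(A)=\sum_{A\in\A^{+}_{\le\ell}} g(A)\,\#\{B\in\A^{+}_{\ell-\deg A}\}=\sum_{A\in\A^{+}_{\le\ell}} g(A)\,q^{\ell-\deg A},
\end{align*}
the pairs $(A,B)$ ranging over monic polynomials with $\deg A+\deg B=\ell$. Pulling out $q^{\ell}$ and writing $q^{-\deg A}=|A|^{-1}$ turns the right-hand side into $q^{\ell}\sum_{A\in\A^{+}_{\le\ell}}g(A)|A|^{-1}=q^{\ell}\sum_{A\in\A^{+}_{\le\ell}}\dfrac{\mu(A)}{|A|\prod_{P\mid A}(1+|P|)}$, which is exactly the asserted formula. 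There is no substantive obstacle here; the only points requiring care are verifying the convolution identity correctly on prime powers and keeping track of the square-free support of $g$, together with the elementary but essential count that the degree-$m$ fibre of $\A^{+}$ has size $q^{m}$.
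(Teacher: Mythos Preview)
Your argument is correct. The convolution identity $h=\mathbf 1\conv g$ is verified correctly on prime powers, and the interchange of summation together with the count $\#\A^{+}_{m}=q^{m}$ yields the stated formula without issue.

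The paper itself gives no proof here, merely citing Lemma~5.7 of \cite{And-Kea}. Your self-contained argument is the standard one (and almost certainly what appears in that reference): write the multiplicative weight as $\mathbf 1$ convolved with a M\"obius-twisted factor, then split the degree-$\ell$ sum along the convolution. So there is nothing to compare beyond noting that you have supplied what the paper outsources.
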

\begin{proof}
This is Lemma 5.7 in \cite{And-Kea}.
\end{proof}

\begin{lem}\label{Lem-2-4-Lem-ccc}
Let $m\ge 0$ be an integer and $h=g$ or $h=g+1/2$.
Then we have
\begin{align*}
\sum_{\substack{L\in\A^{+}\\ \deg(L)> [\frac{h}2]}}\frac{\mu(L)\deg(L)^{m}}{|L|\prod_{P|L}(1+|P|)} = O(g^{m} q^{-\frac{g}2}).
\end{align*}
\end{lem}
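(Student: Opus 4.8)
The goal is to bound the tail sum
\[
S_m := \sum_{\substack{L\in\A^{+}\\ \deg(L)> [\frac{h}2]}}\frac{\mu(L)\deg(L)^{m}}{|L|\prod_{P|L}(1+|P|)}
\]
by $O(g^m q^{-g/2})$, where $h=g$ or $h=g+\tfrac12$ (so $[h/2]$ is essentially $g/2$). The plan is to exploit absolute convergence: since $\prod_{P|L}(1+|P|)\ge |L|$ for squarefree $L$ (and the summand vanishes otherwise, as $\mu(L)=0$), each term is bounded in absolute value by $\deg(L)^{m}/|L|^{2}$. Thus
\[
|S_m| \le \sum_{\substack{L\in\A^{+}\\ \deg(L)> [\frac{h}2]}}\frac{\deg(L)^{m}}{|L|^{2}}
= \sum_{\ell > [\frac{h}2]} \frac{\ell^{m}}{q^{2\ell}} \, \#\A^{+}_{\ell}
= \sum_{\ell > [\frac{h}2]} \frac{\ell^{m}}{q^{\ell}},
\]
using $\#\A^{+}_{\ell}=q^{\ell}$.

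The remaining step is to estimate the geometric-type series $\sum_{\ell > N} \ell^{m} q^{-\ell}$ with $N=[h/2]\ge \tfrac{g-1}{2}$. Since $q\ge 3$, the ratio of consecutive terms $\frac{(\ell+1)^m q^{-(\ell+1)}}{\ell^m q^{-\ell}} = \frac{(1+1/\ell)^m}{q} \le \frac{2^m}{q}$, which is bounded away from $1$ for $\ell$ large compared to $m$; more carefully, for $\ell \ge N$ the series is dominated by $N^{m} q^{-N}$ times a convergent constant depending only on $q$ and $m$ (one can compare with the value $\sum_{\ell\ge 0}(\ell+N)^m q^{-\ell}$, which is $N^m$ times a bounded factor). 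Hence $\sum_{\ell>N}\ell^m q^{-\ell} \ll_{q,m} N^{m} q^{-N} \ll g^{m} q^{-g/2}$, the last step because $N \ge \tfrac{g-1}{2}$ gives $q^{-N}\le q^{1/2}q^{-g/2}$ and $N^m \le (g/2)^m \ll g^m$. This yields the claim $S_m = O(g^m q^{-g/2})$.

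The only mild subtlety — and the one point worth being careful about — is the interplay between $m$ (which is fixed) and $g\to\infty$: one must make sure the implied constant in $\sum_{\ell>N}\ell^m q^{-\ell} \ll N^m q^{-N}$ does not secretly grow with $N$. This is handled by noting that for $\ell\ge N \ge 2m$ (say), $(\ell+1)^m/\ell^m\le e^{m/\ell}\le e^{1/2}<q$, so the tail is a genuinely geometrically decaying series with ratio $\le e^{1/2}/q<1$ from that point on, and the finitely many initial terms with $N<\ell<2m$ are absorbed since $m$ is fixed. Thus the bound is uniform in $g$ once $g$ is large, which is all that is needed for an asymptotic statement. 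I do not expect any serious obstacle here; the lemma is a routine tail estimate whose only role is to control error terms arising when one truncates the Dirichlet series for $G$ and its derivatives at degree $[h/2]$ in the main argument.
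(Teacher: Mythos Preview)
Your argument is correct: the key inequality $\prod_{P\mid L}(1+|P|)\ge |L|$ for squarefree $L$ reduces the tail to $\sum_{\ell>N}\ell^m q^{-\ell}$ with $N=[h/2]\ge (g-1)/2$, and the geometric-decay estimate $\sum_{\ell>N}\ell^m q^{-\ell}\ll_{m,q} N^m q^{-N}\ll g^m q^{-g/2}$ is handled carefully. The paper itself gives no proof here, simply citing \cite{AR}, Lemma~3.4; your self-contained argument is the standard one and almost certainly coincides with what is done there.
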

\begin{proof}
This is Lemma 3.4 in \cite{AR}.
\end{proof}

\begin{lem}\label{mcMhm-mu-4.5}
For $h\in\{g-1, g\}$ and $m\in\{0,1,\ldots, \mu\}$, let
\begin{align*}
\mc M_{h,m}(\mu) = \frac{2^{m}|D|}{\zeta_{\A}(2)}\sum_{\ell=0}^{[\frac{h}2]} \ell^{m} q^{-\ell}\sum_{L\in\A^{+}_{\ell}}\prod_{P|L}(1+|P|^{-1})^{-1}.
\end{align*}
Then we have
\begin{align*}
\mc M_{h,m}(\mu) &= \frac{2^{m}|D|}{\zeta_{\A}(2)} \left(G(1) J_{m}([\tfrac{h}2]) + \frac{G^{(m)}(1)}{(-\ln q)^{m}}\right)   \\
&\hspace{1em}- \frac{2^{m}|D|}{\zeta_{\A}(2)}\sum_{a=1}^{m+1} j_{m}(a)\frac{G^{(a)}(1)}{(-\ln q)^{a}} + O(g^{m} q^{-\frac{g}2}|D|).
\end{align*}
\end{lem}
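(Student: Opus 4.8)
The plan is to evaluate $\mc M_{h,m}(\mu)$ by unwinding the inner sum via Lemma~\ref{Lem-2-3-Lem-bbb} and then collecting the resulting $\ell$-sums into derivatives of $G$ at $s=1$. First I would apply Lemma~\ref{Lem-2-3-Lem-bbb} to rewrite $\sum_{L\in\A^{+}_{\ell}}\prod_{P|L}(1+|P|^{-1})^{-1} = q^{\ell}\sum_{L\in\A_{\le\ell}^{+}}\frac{\mu(L)}{|L|\prod_{P|L}(1+|P|)}$, so that the $q^{-\ell}$ factor cancels and
\begin{align*}
\mc M_{h,m}(\mu) = \frac{2^{m}|D|}{\zeta_{\A}(2)} \sum_{\ell=0}^{[\frac{h}2]} \ell^{m} \sum_{L\in\A^{+}_{\le\ell}} \frac{\mu(L)}{|L|\prod_{P|L}(1+|P|)}.
\end{align*}
Then I would switch the order of summation: writing $d=\deg(L)$, the polynomial $L$ contributes to all $\ell$ with $d\le \ell\le [\frac h2]$, so the double sum becomes $\sum_{L\in\A^{+}}\frac{\mu(L)}{|L|\prod_{P|L}(1+|P|)} \big(\sum_{\ell=\deg(L)}^{[h/2]} \ell^{m}\big)\mathbf 1_{\deg(L)\le[h/2]}$, and $\sum_{\ell=\deg(L)}^{[h/2]}\ell^{m} = J_{m}([\tfrac h2]) - J_{m}(\deg(L)-1)$, using the convention $J_m(-1)=J_m(0)=0$.

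Next I would expand $J_m(\deg(L)-1)$ by Faulhaber's polynomial $J_m(n)=\sum_{a=1}^{m+1} j_m(a) n^{a}$, evaluated at $n=\deg(L)-1$, and re-expand $(\deg(L)-1)^{a}$ binomially so that everything is a linear combination of $\deg(L)^{b}$ with $0\le b\le m+1$; equivalently, one observes $J_m(n-1)$ is itself a polynomial in $n$ of degree $m+1$ with zero constant term, and one may as well just work with the coefficients $j_m(a)$ directly after a shift. This turns the main term into
\begin{align*}
\frac{2^{m}|D|}{\zeta_{\A}(2)} \Bigg( J_{m}([\tfrac h2]) \sum_{L\in\A^{+}} \frac{\mu(L)}{|L|\prod_{P|L}(1+|P|)} - \sum_{a=1}^{m+1} j_m(a)\sum_{L\in\A^{+}} \frac{\mu(L)\deg(L)^{a}}{|L|\prod_{P|L}(1+|P|)} \Bigg),
\end{align*}
up to the restriction $\deg(L)\le[h/2]$. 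Here the full sum $\sum_{L}\frac{\mu(L)}{|L|\prod(1+|P|)} = G(1)$, and by the displayed identity for $G^{(m)}$ in Section~3 (with $s=1$), $\sum_{L}\frac{\mu(L)\deg(L)^{a}}{|L|\prod(1+|P|)} = \frac{G^{(a)}(1)}{(-\ln q)^{a}}$. Recognizing that the $a=0$ term is absent in Faulhaber (zero constant term) but the isolated $J_m([h/2])G(1)$ term plays the role of the missing piece, I would also need to account for the fact that shifting the argument from $\deg(L)$ to $\deg(L)-1$ and regrouping produces an extra $\frac{G^{(m)}(1)}{(-\ln q)^{m}}$ term, which is exactly the term displayed in the statement; this bookkeeping of the shift is where I expect to be most careful.

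Finally, the tail where $\deg(L) > [h/2]$ must be removed: completing each $\sum_{\deg(L)\le[h/2]}$ to $\sum_{L\in\A^{+}}$ introduces an error $\sum_{\deg(L)>[h/2]}\frac{\mu(L)\deg(L)^{a}}{|L|\prod_{P|L}(1+|P|)}$ for $0\le a\le m+1$, and each such tail is $O(g^{m}q^{-g/2})$ by Lemma~\ref{Lem-2-4-Lem-ccc} (noting $h\in\{g-1,g\}$ so $[h/2]$ is of the required form up to the harmless distinction, and $g^{a}\le g^{m+1}$; absorbing one power of $g$ into the implied constant or noting $q^{-g/2}$ dominates, one gets $O(g^m q^{-g/2})$). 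Multiplying by $\frac{2^m|D|}{\zeta_\A(2)}$ gives the stated error $O(g^m q^{-g/2}|D|)$. The main obstacle is not any single estimate but the exact matching of the shift-of-index bookkeeping — verifying that the combinatorial identity relating $\sum_{\ell=\deg L}^{[h/2]}\ell^m$, Faulhaber's coefficients $j_m(a)$, and the derivatives $G^{(a)}(1)$ reproduces precisely the two main terms $G(1)J_m([\tfrac h2]) + \frac{G^{(m)}(1)}{(-\ln q)^m}$ and $-\sum_{a=1}^{m+1} j_m(a)\frac{G^{(a)}(1)}{(-\ln q)^a}$ as written, with the correct signs and no stray terms.
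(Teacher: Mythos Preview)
Your approach is essentially the same as the paper's. The one place where you hedge --- the ``bookkeeping of the shift'' --- is handled in the paper by the elementary identity $J_m(d-1) = J_m(d) - d^m$, which immediately gives
\[
\sum_{\ell=\deg L}^{[h/2]} \ell^{m} \;=\; J_{m}\big([\tfrac{h}{2}]\big) + \deg(L)^{m} - \sum_{a=1}^{m+1} j_{m}(a)\,\deg(L)^{a},
\]
with no need to binomially expand $(\deg L - 1)^{a}$ or to re-derive the coefficients of $J_m(n-1)$ as a polynomial in $n$. The middle term $\deg(L)^{m}$ is precisely what produces the $\dfrac{G^{(m)}(1)}{(-\ln q)^{m}}$ contribution you were trying to account for, and the third term gives the displayed $-\sum_{a=1}^{m+1} j_m(a)\frac{G^{(a)}(1)}{(-\ln q)^{a}}$ on the nose. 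After inserting this identity and completing the three resulting sums over $L$ via Lemma~\ref{Lem-2-4-Lem-ccc}, the argument is exactly as you outline.
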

\begin{proof}
By Lemma \ref{Lem-2-3-Lem-bbb}, we can write
\begin{align}\label{eq4-1eq}
\mc M_{h,m}(\mu) &= \frac{2^{m}|D|}{\zeta_{\A}(2)}\sum_{\ell=0}^{[\frac{h}2]} \ell^{m} \sum_{L\in\A^{+}_{\le \ell}}\frac{\mu(L)}{|L|\prod_{P|L}(1+|P|)}\nonumber\\
&\hspace{1em}= \frac{2^{m}|D|}{\zeta_{\A}(2)}\sum_{L\in\A^{+}_{\le [\frac{h}2]}}\frac{\mu(L)}{|L|\prod_{P|L}(1+|P|)} \sum_{\deg(L)\le\ell\le [\frac{h}2]} \ell^{m}.
\end{align}
For integer $k \ge 0$, recall that $J_{k}(n) = \sum_{\ell=1}^{n} \ell^{k}$, which is a polynomial in $n$ of degree $k+1$ with zero constant term.
Write $J_{k}(n) = \sum_{m=1}^{k+1} j_{k}(m) n^{m}$.
Then we have
\begin{align}\label{mm4-2-mmm}
\sum_{\deg(L)\le\ell\le [\frac{h}2]} \ell^{m} &= J_{m}([\tfrac{h}2]) + \deg(L)^{m} - \sum_{a=1}^{m+1} j_{m}(a) \deg(L)^{a}.
\end{align}
Inserting \eqref{mm4-2-mmm} into \eqref{eq4-1eq}, we have
\begin{align}\label{eq4-3eq}
\mc M_{h,m}(\mu) &= \frac{2^{m}|D|}{\zeta_{\A}(2)}J_{m}([\tfrac{h}2])\sum_{L\in\A^{+}_{\le [\frac{h}2]}}\frac{\mu(L)}{|L|\prod_{P|L}(1+|P|)} \nonumber \\
&\hspace{1em}+ \frac{2^{m}|D|}{\zeta_{\A}(2)}\sum_{L\in\A^{+}_{\le [\frac{h}2]}}\frac{\mu(L)\deg(L)^{m}}{|L|\prod_{P|L}(1+|P|)} \nonumber \\
&\hspace{1em}- \frac{2^{m}|D|}{\zeta_{\A}(2)}\sum_{a=1}^{m+1} j_{m}(a)\sum_{L\in\A^{+}_{\le [\frac{h}2]}}\frac{\mu(L)\deg(L)^{a}}{|L|\prod_{P|L}(1+|P|)}.
\end{align}
Then, from \eqref{eq4-3eq}, by using Lemma \ref{Lem-2-4-Lem-ccc}, we get that
\begin{align}\label{eq4-4eq}
\mc M_{h,m}(\mu) &= \frac{2^{m}|D|}{\zeta_{\A}(2)}J_{m}([\tfrac{h}2])\sum_{L\in\A^{+}}\frac{\mu(L)}{|L|\prod_{P|L}(1+|P|)} \nonumber \\
&\hspace{1em}+ \frac{2^{m}|D|}{\zeta_{\A}(2)}\sum_{L\in\A^{+}}\frac{\mu(L)\deg(L)^{m}}{|L|\prod_{P|L}(1+|P|)} \nonumber \\
&\hspace{1em}- \frac{2^{m}|D|}{\zeta_{\A}(2)}\sum_{a=1}^{m+1} j_{m}(a)\sum_{L\in\A^{+}}\frac{\mu(L)\deg(L)^{a}}{|L|\prod_{P|L}(1+|P|)}
+ O(g^{m} q^{-\frac{g}2}|D|).
\end{align}
We also recall that for any integer $m\ge 0$, we have
\begin{align}\label{mm4-3-mmm}
\frac{G^{(m)}(s)}{(-\ln q)^{m}} = \sum_{L\in\A^{+}}\frac{\mu(L) \deg(L)^{m}}{|L|^{s}\prod_{P|L}(1+|P|)}.
\end{align}
Finally, by \eqref{eq4-4eq} and \eqref{mm4-3-mmm}, we get
\begin{align*}
\mc M_{h,m}(\mu) &= \frac{2^{m}|D|}{\zeta_{\A}(2)} \left(G(1) J_{m}([\tfrac{h}2]) + \frac{G^{(m)}(1)}{(-\ln q)^{m}}\right)   \\
&\hspace{1em}- \frac{2^{m}|D|}{\zeta_{\A}(2)}\sum_{a=1}^{m+1} j_{m}(a)\frac{G^{(a)}(1)}{(-\ln q)^{a}} + O(g^{m} q^{-\frac{g}2}|D|).
\end{align*}
\end{proof}

\begin{lem}\label{mcMhm-mu-4.6}
For $h\in\{g-1, g\}$, let
\begin{align*}
\mc N_{h}(\mu) = \frac{|D|}{\zeta_{\A}(2)}q^{-\frac{h+1}{2}}\sum_{\ell=0}^{[\frac{h}2]} \sum_{L\in\A^{+}_{\ell}}\prod_{P|L}(1+|P|^{-1})^{-1}.
\end{align*}
Then we have
\begin{align*}
\mc N_{h}(\mu) = G(1) |D| q^{[\frac{h}2]-\frac{h+1}{2}} + O(g|D|^{\frac{3}{4}}).
\end{align*}
\end{lem}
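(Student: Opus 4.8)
The plan is to follow the strategy of Lemma~\ref{mcMhm-mu-4.5}, replacing the polynomial weight $\ell^{m}$ there by the geometric weight $q^{\ell}$ produced by Lemma~\ref{Lem-2-3-Lem-bbb}; throughout one works in the range $|D|=q^{2g+2}$ relevant to Theorem~\ref{thm:2}. First I would apply Lemma~\ref{Lem-2-3-Lem-bbb} to each inner sum, interchange the order of summation, and evaluate the resulting finite geometric progression $\sum_{\deg(L)\le\ell\le[\frac{h}2]}q^{\ell}=\frac{q^{[\frac{h}2]+1}-q^{\deg(L)}}{q-1}$. Since $q^{\deg(L)}=|L|$, this gives
\begin{align*}
\mc N_{h}(\mu)
&= \frac{|D|}{\zeta_{\A}(2)}q^{-\frac{h+1}{2}}\sum_{L\in\A^{+}_{\le[\frac{h}2]}}\frac{\mu(L)}{|L|\prod_{P|L}(1+|P|)}\,\frac{q^{[\frac{h}2]+1}-|L|}{q-1} \\
&= \frac{|D|}{\zeta_{\A}(2)(q-1)}q^{[\frac{h}2]+1-\frac{h+1}{2}}\,S_{1} - \frac{|D|}{\zeta_{\A}(2)(q-1)}q^{-\frac{h+1}{2}}\,S_{2},
\end{align*}
where $S_{1}=\sum_{L\in\A^{+}_{\le[\frac{h}2]}}\frac{\mu(L)}{|L|\prod_{P|L}(1+|P|)}$ and $S_{2}=\sum_{L\in\A^{+}_{\le[\frac{h}2]}}\frac{\mu(L)}{\prod_{P|L}(1+|P|)}$.

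For the $S_{1}$-term, I would complete the sum to all of $\A^{+}$, so that $S_{1}=G(1)+O(q^{-g/2})$: the tail over $\deg(L)>[\frac{h}2]$ is $O(q^{-g/2})$ by Lemma~\ref{Lem-2-4-Lem-ccc} with $m=0$ (passing from the cutoff $[\frac{g}2]$ covered by that lemma to $[\frac{h}2]$ for $h\in\{g-1,g\}$ changes the cutoff by at most one, and a single degree $n\le[\frac{g}2]$ contributes at most $\sum_{\deg(L)=n}|L|^{-2}=q^{-n}$ since $\mu(L)\ne0$ forces $\prod_{P|L}(1+|P|)\ge|L|$). Using $\zeta_{\A}(2)=q/(q-1)$, the coefficient $\frac{|D|}{\zeta_{\A}(2)(q-1)}q^{[\frac{h}2]+1-\frac{h+1}{2}}$ collapses to $|D|q^{[\frac{h}2]-\frac{h+1}{2}}$, so the $S_{1}$-term equals $G(1)|D|q^{[\frac{h}2]-\frac{h+1}{2}}+O\bigl(|D|q^{[\frac{h}2]-\frac{h+1}{2}-g/2}\bigr)$. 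Since $[\frac{h}2]-\frac{h+1}{2}\le-\tfrac12$ for all $h\in\{g-1,g\}$, the error exponent is at most $-\tfrac{g+1}{2}$, and $|D|q^{-(g+1)/2}=|D|^{3/4}$; hence this term is $G(1)|D|q^{[\frac{h}2]-\frac{h+1}{2}}+O(|D|^{3/4})$.

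For the $S_{2}$-term the crucial point is that the weight has become $q^{\deg(L)}/|L|=1$, so $S_{2}$ is a sum of the $[\frac{h}2]+1$ partial sums $\sum_{L\in\A^{+}_{\ell}}\frac{\mu(L)}{\prod_{P|L}(1+|P|)}$; and because $\mu(L)\ne0$ forces $\prod_{P|L}(1+|P|)\ge\prod_{P|L}|P|=|L|$, each such partial sum is bounded in absolute value by $\sum_{L\in\A^{+}_{\ell}}|L|^{-1}=1$, whence $|S_{2}|\le[\frac{h}2]+1=O(g)$. Multiplying by $\frac{|D|}{\zeta_{\A}(2)(q-1)}q^{-\frac{h+1}{2}}=\frac{|D|}{q}q^{-\frac{h+1}{2}}$ shows the $S_{2}$-term is $O\bigl(g|D|q^{-1-\frac{h+1}{2}}\bigr)$; for $h\in\{g-1,g\}$ this exponent satisfies $-1-\frac{h+1}{2}\le-\frac{g+2}{2}$, and $|D|q^{-(g+2)/2}\le q^{(3g+2)/2}\le|D|^{3/4}$, so the $S_{2}$-term is $O(g|D|^{3/4})$. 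Adding the two contributions yields the claimed formula. I expect the only delicate step to be this last bound: one must exploit the squarefreeness of $L$ to extract the factor $|L|$ from $\prod_{P|L}(1+|P|)$, and it is precisely the crude estimate $|S_{2}|=O(g)$ (rather than a sharper, cancellation-driven bound) that produces the factor $g$ in the error term; everything else is bookkeeping with geometric series together with the identity $\zeta_{\A}(2)=q/(q-1)$.
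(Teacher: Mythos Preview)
Your proof is correct and follows essentially the same approach as the paper: apply Lemma~\ref{Lem-2-3-Lem-bbb}, swap the order of summation, evaluate the geometric progression (your formula $\frac{q^{[\frac{h}2]+1}-|L|}{q-1}$ is the paper's $\zeta_{\A}(2)(q^{[\frac{h}2]}-q^{\deg(L)-1})$ after using $\zeta_{\A}(2)=q/(q-1)$), treat the $S_{1}$-sum via Lemma~\ref{Lem-2-4-Lem-ccc} to extract $G(1)$, and bound the $S_{2}$-sum by $O(g)$ using $\prod_{P|L}(1+|P|)\ge |L|$ for squarefree $L$. The only cosmetic difference is that you are more explicit than the paper about the half-integer bookkeeping and the passage from the cutoff $[\tfrac{g}2]$ in Lemma~\ref{Lem-2-4-Lem-ccc} to $[\tfrac{h}2]$ for $h=g-1$.
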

\begin{proof}
By Lemma \ref{Lem-2-3-Lem-bbb}, we can write
\begin{align*}
\mc N_{h}(\mu) &= \frac{|D|}{\zeta_{\A}(2)}q^{-\frac{h+1}{2}}\sum_{\ell=0}^{[\frac{h}2]} q^{\ell}
\sum_{L\in\A_{\le \ell}^{+}} \frac{\mu(L)}{|L|\prod_{P|L}(1+|P|)}\nonumber\\
&\hspace{1em}= \frac{|D|}{\zeta_{\A}(2)}q^{-\frac{h+1}{2}}\sum_{L\in\A^{+}_{\le [\frac{h}2]}}\frac{\mu(L)}{|L|\prod_{P|L}(1+|P|)}
\sum_{\deg(L)\le\ell\le [\frac{h}2]} q^{\ell}.
\end{align*}
Since
\begin{align*}
\sum_{\deg(L)\le\ell\le [\frac{h}2]} q^{\ell} = \zeta_{\A}(2) \left(q^{[\frac{h}2]} - q^{\deg(L)-1}\right),
\end{align*}
we get that
\begin{align}\label{eq4-6eq}
\mc N_{h}(\mu) = |D| q^{[\frac{h}2]-\frac{h+1}{2}}\sum_{L\in\A^{+}_{\le [\frac{h}2]}}\frac{\mu(L)}{|L|\prod_{P|L}(1+|P|)}
- |D| q^{-\frac{h+3}{2}} \sum_{L\in\A^{+}_{\le [\frac{h}2]}}\frac{\mu(L)}{\prod_{P|L}(1+|P|)}.
\end{align}
By Lemma \ref{Lem-2-4-Lem-ccc} and \eqref{mm4-3-mmm}, we have
\begin{align}\label{eq4-1eqrrtt4-7}
\sum_{L\in\A^{+}_{\le [\frac{h}2]}}\frac{\mu(L)}{|L|\prod_{P|L}(1+|P|)}
= \sum_{L\in\A^{+}}\frac{\mu(L)}{|L|\prod_{P|L}(1+|P|)} + O(q^{-\frac{g}2})
= G(1) + O(q^{-\frac{g}2}).
\end{align}
We also have
\begin{align}\label{eq4-1eqrrtt4-8}
\bigg|\sum_{L\in\A^{+}_{\le [\frac{h}2]}}\frac{\mu(L)}{\prod_{P|L}(1+|P|)}\bigg|
&\ll \sum_{\ell=0}^{[\frac{h}2]} \sum_{L\in\mc H_{\ell}}\frac{1}{|L|} \ll g.
\end{align}
By inserting \eqref{eq4-1eqrrtt4-7} and \eqref{eq4-1eqrrtt4-8} into \eqref{eq4-6eq}, we get
\begin{align*}
\mc N_{h}(\mu) = G(1) |D| q^{[\frac{h}2]-\frac{h+1}{2}} + O(g|D|^{\frac{3}{4}}).
\end{align*}
\end{proof}

\section{Proof of Theorem \ref{thm:1}}
In this section we give a proof of Theorem \ref{thm:1}.

\subsection{$\mu$-th derivative of $L(s,\chi_{D})$ for $D\in\mc H_{2g+1}$}
For any $D\in\mc H_{2g+1}$, the approximate functional equation for $L(s,\chi_{D})$ (\cite[Lemma 3.3]{And-Kea}) gives us
\begin{align}\label{L-ff}
L(s,\chi_{D}) = \sum_{f\in\A^{+}_{\le g}} \chi_{D}(f) |f|^{-s} + q^{(1-2s)g} \sum_{f\in\A^{+}_{\le g-1}} \chi_{D}(f) |f|^{s-1}.
\end{align}

\begin{lem}
Let $D \in \mc H_{2g+1}$.
For any integer $\mu \ge 0$, we have
\begin{align*}
\frac{L^{(\mu)}(s,\chi_{D})}{(\ln q)^{\mu}} = \sum_{n=0}^{g} (-n)^{\mu} A_{n}(D) q^{-ns}
+ q^{(1-2s)g} \sum_{m=0}^{\mu} {\mu\choose m} (-2g)^{\mu-m} \sum_{n=0}^{g-1} n^{m} A_{n}(D) q^{(s-1)n},
\end{align*}
where $A_{n}(D) = \sum_{f\in\A^{+}_{n}}\chi_{D}(f)$.
In particular, we also have
\begin{align}\label{eq1-1-eqii}
\frac{L^{(\mu)}(\tfrac{1}2,\chi_{D})}{(\ln q)^{\mu}} &= \sum_{n=0}^{g} (-n)^{\mu} A_{n}(D) q^{-\frac{n}2}
+ \sum_{m=0}^{\mu} {\mu\choose m} (-2g)^{\mu-m} \sum_{n=0}^{g-1} n^{m} A_{n}(D) q^{-\frac{n}2}.
\end{align}
\end{lem}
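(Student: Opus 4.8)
The plan is to take \eqref{L-ff} and differentiate it $\mu$ times with respect to $s$, term by term. Since $D\in\mc H_{2g+1}$ forces $L(s,\chi_D)$ to be a polynomial in $u=q^{-s}$, both sums in \eqref{L-ff} are finite, so termwise differentiation is legitimate with no convergence or analyticity concerns; the whole argument is therefore elementary. First I would regroup \eqref{L-ff} according to the degree $n=\deg(f)$, using $|f|=q^{n}$ and the notation $A_{n}(D)=\sum_{f\in\A^{+}_{n}}\chi_{D}(f)$, to rewrite it as
\begin{align*}
L(s,\chi_{D}) = \sum_{n=0}^{g} A_{n}(D)\, q^{-ns} + q^{(1-2s)g}\sum_{n=0}^{g-1} A_{n}(D)\, q^{(s-1)n}.
\end{align*}

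For the first (diagonal) sum I would simply use $\frac{d^{\mu}}{ds^{\mu}} q^{-ns} = (-n\ln q)^{\mu} q^{-ns} = (\ln q)^{\mu}(-n)^{\mu} q^{-ns}$, which after dividing by $(\ln q)^{\mu}$ contributes $\sum_{n=0}^{g}(-n)^{\mu}A_{n}(D)q^{-ns}$. The only place that needs a moment of care is the second (dual) sum, where $q^{(1-2s)g}$ and $q^{(s-1)n}$ both depend on $s$; there I would apply the Leibniz rule,
\begin{align*}
\frac{d^{\mu}}{ds^{\mu}}\Bigl(q^{(1-2s)g}\, q^{(s-1)n}\Bigr) = \sum_{m=0}^{\mu}\binom{\mu}{m}\Bigl(\tfrac{d^{\mu-m}}{ds^{\mu-m}} q^{(1-2s)g}\Bigr)\Bigl(\tfrac{d^{m}}{ds^{m}} q^{(s-1)n}\Bigr),
\end{align*}
together with $\frac{d^{j}}{ds^{j}} q^{(1-2s)g} = (-2g\ln q)^{j} q^{(1-2s)g}$ and $\frac{d^{m}}{ds^{m}} q^{(s-1)n} = (n\ln q)^{m} q^{(s-1)n}$. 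Collecting the $(\ln q)^{\mu}$, summing over $n$ against $A_{n}(D)$, and interchanging the (finite) sums over $m$ and $n$ then yields exactly the stated double sum; equivalently one could first combine the exponentials and expand $(n-2g)^{\mu}=\sum_{m}\binom{\mu}{m}n^{m}(-2g)^{\mu-m}$ by the binomial theorem, which explains the shape of the answer.

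Finally, to deduce \eqref{eq1-1-eqii} I would specialize to $s=\tfrac12$, where $q^{(1-2s)g}=q^{0}=1$ and $q^{-ns}=q^{(s-1)n}=q^{-n/2}$, so the prefactor collapses and both sums acquire the common weight $q^{-n/2}$. I do not expect any genuine obstacle: the argument is a direct computation, and the only thing to be careful about is the bookkeeping of signs, of the factor $(\ln q)^{\mu}$, and of the edge conventions $0^{0}=1$ and $(-0)^{\mu}=0$ in the degenerate terms.
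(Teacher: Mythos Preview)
Your proposal is correct and matches the paper's proof essentially step for step: the paper also rewrites \eqref{L-ff} in terms of $A_n(D)$, differentiates the first sum directly, applies the Leibniz rule to the product $q^{(1-2s)g}\cdot\gamma(s)$ in the second, and then specializes to $s=\tfrac12$. The only cosmetic difference is that the paper names the pieces $\alpha(s),\beta(s),\gamma(s)$ and applies Leibniz to $\beta\gamma$ before differentiating $\gamma$ termwise, whereas you apply Leibniz inside the $n$-sum; the computations are identical.
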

\begin{proof}
By \eqref{L-ff}, we can write $L(s,\chi_{D}) = \alpha(s) + \beta(s) \gamma(s)$, where
\begin{align*}
\alpha(s) = \sum_{n=0}^{g} A_{n}(D) q^{-ns}, ~~\beta(s) = q^{(1-2s)g} ~~\text{ and }~~ \gamma(s) = \sum_{n=0}^{g-1} A_{n}(D) q^{(s-1)n}.
\end{align*}
Then we have
\begin{align*}
L^{(\mu)}(s,\chi_{D}) = \alpha^{(\mu)}(s) + \sum_{m=0}^{\mu} {\mu\choose m} \beta^{(\mu-m)}(s) \gamma^{(m)}(s).
\end{align*}
For any integer $m \ge 0$, we also have
\begin{align*}
\alpha^{(m)}(s) = (\ln q)^{m}\sum_{n=0}^{g} (-n)^{m} A_{n}(D) q^{-ns}, ~~  \beta^{(m)}(s) = (-2g\ln q)^{m} q^{(1-2s)g}
\end{align*}
and
\begin{align*}
\gamma^{(m)}(s) = (\ln q)^{m} \sum_{n=0}^{g-1} n^{m} A_{n}(D) q^{(s-1)n}.
\end{align*}
By combining the previous equations it follows that
\begin{align*}
\frac{L^{(\mu)}(s,\chi_{D})}{(\ln q)^{\mu}} &= \sum_{n=0}^{g} (-n)^{\mu} A_{n}(D) q^{-ns}
+ q^{(1-2s)g} \sum_{m=0}^{\mu} {\mu\choose m} (-2g)^{\mu-m} \sum_{n=0}^{g-1} n^{m} A_{n}(D) q^{(s-1)n}
\end{align*}
and
\begin{align*}
\frac{L^{(\mu)}(\tfrac{1}2,\chi_{D})}{(\ln q)^{\mu}} = \sum_{n=0}^{g} (-n)^{\mu} A_{n}(D) q^{-\frac{n}2}
+ \sum_{m=0}^{\mu} {\mu\choose m} (-2g)^{\mu-m} \sum_{n=0}^{g-1} n^{m} A_{n}(D) q^{-\frac{n}2}.
\end{align*}
\end{proof}

Write
$$
\mc S_{h,m}^{\rm o}(\mu) = \sum_{n=0}^{h} n^{m} q^{-\frac{n}2}\sum_{f\in\A_{n}^{+}} \sum_{D\in\mc H_{2g+1}}\chi_{D}(f)
$$
for $h\in\{g-1, g\}$ and $m\in\{0,1,\ldots, \mu\}$.
Then, by \eqref{eq1-1-eqii}, we can write
\begin{align*}
\sum_{D\in\mc H_{2g+1}} \frac{L^{(\mu)}(\tfrac{1}{2},\chi_{D})}{(\ln q)^{\mu}}
= (-1)^{\mu} \mc S_{g,\mu}^{\rm o}(\mu) + \sum_{m=0}^{\mu} {\mu\choose m} (-2g)^{\mu-m} \mc S_{g-1,m}^{\rm o}(\mu).
\end{align*}

\subsection{Averaging $\mc S_{h,m}^{\rm o}(\mu)$}
In this subsection we obtain an asymptotic formula of $\mc S_{h,m}^{\rm o}(\mu)$.

\begin{prop}
For $h\in\{g-1, g\}$ and $m\in\{0,1,\ldots, \mu\}$, we have
\begin{align}\label{Shm-053}
\mc S_{h,m}^{\rm o}(\mu)
&= \frac{2^{m}|D|}{\zeta_{\A}(2)} \left(G(1) J_{m}([\tfrac{h}2]) + \frac{G^{(m)}(1)}{(-\ln q)^{m}}\right)  \nonumber \\
&\hspace{1em}- \frac{2^{m}|D|}{\zeta_{\A}(2)}\sum_{n=1}^{m+1} j_{m}(n)\frac{G^{(n)}(1)}{(-\ln q)^{n}}
+ O(|D|^{\frac{7}8} (\log_{q}|D|)^{m}).
\end{align}
\end{prop}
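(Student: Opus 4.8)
The plan is to split the innermost sum over $f\in\A^{+}_{n}$ according to whether $f$ is a perfect square, since $\sum_{D\in\mc H_{2g+1}}\chi_{D}(f)$ behaves in two completely different ways: for square $f$ it is essentially a counting function, whereas for non-square $f$ it exhibits square-root cancellation via Lemma \ref{Bound-lemma2-1-BBB}.

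First I would deal with the square part. Writing $f=L^{2}$ with $L\in\A^{+}$, one has $\chi_{D}(L^{2})=\chi_{D}(L)^{2}$, which is the indicator of the condition $(D,L)=1$; hence $\sum_{D\in\mc H_{2g+1}}\chi_{D}(L^{2})=\sum_{D\in\mc H_{2g+1},\,(D,L)=1}1$. Setting $n=2\ell$ and $\ell=\deg L$, the square contribution to $\mc S^{\rm o}_{h,m}(\mu)$ becomes
\[
2^{m}\sum_{\ell=0}^{[\frac{h}2]}\ell^{m}q^{-\ell}\sum_{L\in\A^{+}_{\ell}}\,\sum_{\substack{D\in\mc H_{2g+1}\\ (D,L)=1}}1 .
\]
Applying Lemma \ref{Lem-2-2-Lem-aaa} with small $\varepsilon>0$ (and noting $\prod_{P\mid L^{2}}(1+|P|^{-1})^{-1}=\prod_{P\mid L}(1+|P|^{-1})^{-1}$), the main part of this expression is exactly $\mc M_{h,m}(\mu)$, while the accumulated error is
\[
\ll|D|^{\frac12}\sum_{\ell=0}^{[\frac{h}2]}\ell^{m}q^{-\ell}\sum_{L\in\A^{+}_{\ell}}|L|^{\varepsilon}\ll g^{m}|D|^{\frac12}q^{\varepsilon g/2}\ll|D|^{\frac12+\varepsilon'}(\log_{q}|D|)^{m}
\]
for any $\varepsilon'>0$, using $g\ll\log_{q}|D|$ and $q^{g}\le|D|^{1/2}$. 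Lemma \ref{mcMhm-mu-4.5} then expands $\mc M_{h,m}(\mu)$ into the main term displayed in \eqref{Shm-053}, up to a further error $O(g^{m}q^{-g/2}|D|)$; since $q^{-g/2}|D|=q^{3g/2+1}\le|D|^{7/8}$ for $g\ge1$, this too is $O(|D|^{7/8}(\log_{q}|D|)^{m})$.

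Next I would handle the non-square part, which furnishes the dominant error. For non-square $f\in\A^{+}_{n}$, Lemma \ref{Bound-lemma2-1-BBB} gives $\big|\sum_{D\in\mc H_{2g+1}}\chi_{D}(f)\big|\ll|D|^{1/2}|f|^{1/4}=|D|^{1/2}q^{n/4}$, so bounding trivially,
\[
\bigg|\sum_{n=0}^{h}n^{m}q^{-\frac{n}2}\sum_{\substack{f\in\A^{+}_{n}\\ f\ \text{non-square}}}\,\sum_{D\in\mc H_{2g+1}}\chi_{D}(f)\bigg|\ll|D|^{\frac12}\sum_{n=0}^{h}n^{m}q^{-\frac{n}2}q^{n}q^{\frac{n}4}=|D|^{\frac12}\sum_{n=0}^{h}n^{m}q^{\frac{3n}4}.
\]
The last sum is dominated by its top term $n=h\le g$, giving $\ll g^{m}|D|^{1/2}q^{3g/4}$; since $q^{3g/4}\le|D|^{3/8}$ and $g\ll\log_{q}|D|$, this is $O(|D|^{7/8}(\log_{q}|D|)^{m})$.

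Adding the two contributions gives \eqref{Shm-053}: the main term is precisely what Lemma \ref{mcMhm-mu-4.5} yields from $\mc M_{h,m}(\mu)$ (the factor $2^{m}$ being a constant depending only on $\mu$), while every error encountered — from Lemma \ref{Lem-2-2-Lem-aaa}, from Lemma \ref{mcMhm-mu-4.5}, and from the trivial bound on the non-square part — is $O(|D|^{7/8}(\log_{q}|D|)^{m})$. The step that I expect to need the most care is the estimation of the non-square contribution, because it is there that the final exponent $\tfrac78$ is pinned down; but no deep obstacle is involved, since the two essential inputs — the square-free count of Lemma \ref{Lem-2-2-Lem-aaa} and the character-sum bound of Lemma \ref{Bound-lemma2-1-BBB} — are already in hand, and what remains is bookkeeping.
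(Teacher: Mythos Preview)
Your proposal is correct and follows essentially the same route as the paper: split $\mc S_{h,m}^{\rm o}(\mu)$ into the square and non-square contributions, bound the non-square part via Lemma~\ref{Bound-lemma2-1-BBB} to get $O(|D|^{7/8}(\log_q|D|)^m)$, and for the square part write $f=L^2$, apply Lemma~\ref{Lem-2-2-Lem-aaa} to reduce to $\mc M_{h,m}(\mu)$ plus a small error, then invoke Lemma~\ref{mcMhm-mu-4.5}. You even make explicit a couple of inequalities (e.g.\ $q^{-g/2}|D|\le|D|^{7/8}$ and $q^{3g/4}\le|D|^{3/8}$) that the paper leaves to the reader.
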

\begin{proof}
We split the sum over $f$ with $f$ being a perfect square of a polynomial or not.
Then we have
\begin{align*}
\mc S_{h,m}^{\rm o}(\mu) = \mc S_{h,m}^{\rm o}(\mu)_{\square} + \mc S_{h,m}^{\rm o}(\mu)_{\ne\square},
\end{align*}
where
\begin{align}\label{square-odd}
\mc S_{h,m}^{\rm o}(\mu)_{\square} = \sum_{n=0}^{h} n^{m} q^{-\frac{n}2}\sum_{\substack{f\in\A^{+}_{n}\\ f=\square}} \sum_{D\in\mc H_{2g+1}} \chi_{D}(f)
\end{align}
and
\begin{align}\label{nonsquare-odd}
\mc S_{h,m}^{\rm o}(\mu)_{\ne\square} = \sum_{n=0}^{h} n^{m} q^{-\frac{n}2}\sum_{\substack{f\in\A^{+}_{n}\\ f\ne\square}} \sum_{D\in\mc H_{2g+1}}\chi_{D}(f).
\end{align}
For the contribution of non-squares, from \eqref{nonsquare-odd} by using Lemma \ref{Bound-lemma2-1-BBB}, we have
\begin{align}\label{ns-odd}
|\mc S_{h,m}^{\rm o}(\mu)_{\ne\square}|
&\ll \sum_{n=0}^{h} n^{m} q^{-\frac{n}2}\sum_{\substack{f\in\A^{+}_{n}\\ f\ne\square}} \bigg|\sum_{D\in\mc H_{2g+1}}\chi_{D}(f)\bigg|  \nonumber\\
&\ll \sum_{n=0}^{h} n^{m} q^{-\frac{n}2}\sum_{f\in\A^{+}_{n}}|D|^{\frac{1}2} |f|^{\frac{1}4}
\ll |D|^{\frac{7}8} (\log_{q}|D|)^{m}.
\end{align}
Now, we consider the contribution of squares.
From \eqref{square-odd}, by using Lemma \ref{Lem-2-2-Lem-aaa}, we get
\begin{align}\label{Shm-057}
\mc S_{h,m}^{\rm o}(\mu)_{\square}
= \sum_{n=0}^{h} n^{m} q^{-\frac{n}2}\sum_{\substack{f\in\A^{+}_{n}\\ f=\square}}\sum_{\substack{D\in\mc H_{2g+1}\\ (D,f)=1}} 1
= \mc M_{h,m}(\mu) + O(g^{m} q^{g\varepsilon} |D|^{\frac{1}2}),
\end{align}
where
\begin{align*}
\mc M_{h,m}(\mu)
= \frac{2^{m}|D|}{\zeta_{\A}(2)}\sum_{\ell=0}^{[\frac{h}2]} \ell^{m} q^{-\ell} \sum_{L\in\A^{+}_{\ell}}\prod_{P|L}(1+|P|^{-1})^{-1}.
\end{align*}
By Lemma \ref{mcMhm-mu-4.5}, we have
\begin{align}\label{Mhm-0511}
\mc M_{h,m}(\mu) &= \frac{2^{m}|D|}{\zeta_{\A}(2)} \left(G(1) J_{m}([\tfrac{h}2]) + \frac{G^{(m)}(1)}{(-\ln q)^{m}}\right)  \nonumber\\
&\hspace{1em}- \frac{2^{m}|D|}{\zeta_{\A}(2)}\sum_{n=1}^{m+1} j_{m}(n)\frac{G^{(n)}(1)}{(-\ln q)^{n}} + O(g^{m} q^{-\frac{g}2}|D|).
\end{align}
By inserting \eqref{Mhm-0511} into \eqref{Shm-057}, it follows that
\begin{align}\label{Shm-0512}
\mc S_{h,m}^{\rm o}(\mu)_{\square} &= \frac{2^{m}|D|}{\zeta_{\A}(2)} \left(G(1) J_{m}([\tfrac{h}2]) + \frac{G^{(m)}(1)}{(-\ln q)^{m}}\right)  \nonumber\\
&\hspace{1em}- \frac{2^{m}|D|}{\zeta_{\A}(2)}\sum_{n=1}^{m+1} j_{m}(n)\frac{G^{(n)}(1)}{(-\ln q)^{n}} + O(g^{m} q^{-\frac{g}2}|D|).
\end{align}
Finally, combining \eqref{ns-odd} and \eqref{Shm-0512}, we obtain the result.
\end{proof}

\subsection{Completing the proof}
Recall that
\begin{align}\label{sumDH2g+1}
\sum_{D\in\mc H_{2g+1}} \frac{L^{(\mu)}(\tfrac{1}{2},\chi_{D})}{(\ln q)^{\mu}}
= (-1)^{\mu} \mc S_{g,\mu}^{\rm o}(\mu) + \sum_{m=0}^{\mu} {\mu\choose m} (-2g)^{\mu-m} \mc S_{g-1,m}^{\rm o}(\mu).
\end{align}
By \eqref{Shm-053} with $h=g$ and $m=\mu$, we have that
\begin{align}\label{Shm-053-mu-0}
(-1)^{\mu} \mc S_{g,\mu}^{\rm o}(\mu) &= \frac{(-2)^{\mu}|D|}{\zeta_{\A}(2)} \left(G(1)J_{\mu}([\tfrac{g}2]) + \frac{G^{(\mu)}(1)}{(-\ln q)^{\mu}}\right) \nonumber\\
&\hspace{1em}- \frac{(-2)^{\mu}|D|}{\zeta_{\A}(2)}\sum_{n=1}^{\mu+1} j_{\mu}(n)\frac{G^{(n)}(1)}{(-\ln q)^{n}}
+ O(|D|^{\frac{7}8} (\log_{q}|D|)^{\mu}).
\end{align}
We also, by \eqref{Shm-053}, have that
\begin{align}\label{Shm-053-5-11}
\sum_{m=0}^{\mu} {\mu\choose m} (-2g)^{\mu-m} \mc S_{g-1,m}^{\rm o}(\mu)
&= \frac{2^{\mu}|D|}{\zeta_{\A}(2)} \sum_{m=0}^{\mu} {\mu\choose m} (-g)^{\mu-m}\left(G(1)J_{m}([\tfrac{g-1}2]) + \frac{G^{(m)}(1)}{(-\ln q)^{m}}\right) \nonumber\\
&\hspace{1em}- \frac{2^{\mu}|D|}{\zeta_{\A}(2)} \sum_{m=0}^{\mu}{\mu\choose m} (-g)^{\mu-m} \sum_{n=1}^{m+1} j_{m}(n)\frac{G^{(n)}(1)}{(-\ln q)^{n}}
+ O(|D|^{\frac{7}8} (\log_{q}|D|)^{\mu}).
\end{align}
By inserting \eqref{Shm-053-mu-0} and \eqref{Shm-053-5-11} into \eqref{sumDH2g+1}, we complete the proof.
\begin{flushright}
$\square$
\end{flushright}

\section{Proof of Theorem \ref{thm:2}}
In this section we give a proof of Theorem \ref{thm:2}.

\subsection{$\mu$-th derivative of $L(s,\chi_{D})$ for $D\in\mc H_{2g+2}$}
For $D\in\mc H_{2g+2}$, the approximate functional equation for $L(s,\chi_{D})$ (\cite[Lemma 2.1]{Jun}) gives us
\begin{align}\label{LsD-even}
L(s,\chi_{D}) &= \sum_{f\in\A^{+}_{\le g}} \chi_{D}(f) |f|^{-s}- q^{-(g+1)s} \sum_{f\in\A^{+}_{\le g}} \chi_{D}(f) \nonumber  \\
&\hspace{1em} + q^{(1-2s)g} \delta(s) \sum_{f\in\A^{+}_{\le g-1}}\chi_{D}(f) |f|^{s-1} - q^{-gs} \delta(s)\sum_{f\in\A^{+}_{\le g-1}}\chi_{D}(f),
\end{align}
where $\delta(s) = \frac{1-q^{-s}}{1-q^{s-1}}$.

\begin{lem}
Let $D \in \mc H_{2g+2}$.
For any integer $\mu \ge 0$, we have
\begin{align*}
\frac{L^{(\mu)}(s,\chi_{D})}{(-\ln q)^{\mu}} &= \sum_{n=0}^{g} n^{\mu} A_{n}(D) q^{-ns} - (g+1)^{\mu} q^{-(g+1)s} \sum_{n=0}^{g} A_{n}(D) \\
&\hspace{1em} + q^{(1-2s)g} \sum_{a+b+c=\mu} \frac{\mu !}{a! b! c!} \frac{(2g)^{a} \delta^{(b)}(s)}{(-\ln q)^{b}} \sum_{n=0}^{g-1} (-n)^{c} A_{n}(D) q^{n(s-1)} \\
&\hspace{1em} - q^{-gs} \sum_{m=0}^{\mu} {\mu\choose m} \frac{g^{\mu-m} \delta^{(m)}(s)}{(-\ln q)^{m}} \sum_{n=0}^{g-1} A_{n}(D),
\end{align*}
where $A_{n}(D) = \sum_{f\in\A^{+}_{n}}\chi_{D}(f)$.
In particular, we also have
\begin{align}\label{eq-derk-eee}
\frac{L^{(\mu)}(\tfrac{1}2,\chi_{D})}{(-\ln q)^{\mu}}
&= \sum_{n=0}^{g} n^{\mu} A_{n}(D) q^{-\frac{n}{2}} - (g+1)^{\mu} q^{-\frac{g+1}{2}} \sum_{n=0}^{g} A_{n}(D) \nonumber \\
&\hspace{1em} + \sum_{a+b+c=\mu} \frac{\mu !}{a! b! c!} \frac{(2g)^{a} \delta^{(b)}(\frac{1}2)}{(-\ln q)^{b}}
\sum_{n=0}^{g-1} (-n)^{c} A_{n}(D) q^{-\frac{n}{2}} \nonumber \\
&\hspace{1em} - q^{-\frac{g}{2}} \sum_{m=0}^{\mu} {\mu\choose m} \frac{g^{\mu-m} \delta^{(m)}(\tfrac{1}{2})}{(-\ln q)^{m}} \sum_{n=0}^{g-1} A_{n}(D).
\end{align}
\end{lem}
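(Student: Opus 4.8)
The plan is to differentiate the approximate functional equation \eqref{LsD-even} directly, treating each of its four summands as a product of a few elementary factors whose derivatives are immediate. It is convenient to abbreviate
\begin{align*}
P_{\le N}(s) &= \sum_{f\in\A^{+}_{\le N}}\chi_{D}(f)|f|^{-s} = \sum_{n=0}^{N} A_{n}(D) q^{-ns}, \qquad
R_{\le N} = \sum_{f\in\A^{+}_{\le N}}\chi_{D}(f) = \sum_{n=0}^{N} A_{n}(D), \\
Q_{\le N}(s) &= \sum_{f\in\A^{+}_{\le N}}\chi_{D}(f)|f|^{s-1} = \sum_{n=0}^{N} A_{n}(D) q^{n(s-1)},
\end{align*}
so that \eqref{LsD-even} reads
\begin{align*}
L(s,\chi_{D}) = P_{\le g}(s) - q^{-(g+1)s} R_{\le g} + q^{(1-2s)g}\delta(s) Q_{\le g-1}(s) - q^{-gs}\delta(s) R_{\le g-1}.
\end{align*}

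First I would record the derivatives of the elementary pieces: for every integer $m\ge 0$ one has $\frac{\d^{m}}{\d s^{m}} q^{-ns} = (-n\ln q)^{m} q^{-ns}$, $\frac{\d^{m}}{\d s^{m}} q^{n(s-1)} = (n\ln q)^{m} q^{n(s-1)}$, $\frac{\d^{m}}{\d s^{m}} q^{-(g+1)s} = (-(g+1)\ln q)^{m} q^{-(g+1)s}$, $\frac{\d^{m}}{\d s^{m}} q^{-gs} = (-g\ln q)^{m} q^{-gs}$ and $\frac{\d^{m}}{\d s^{m}} q^{(1-2s)g} = (-2g\ln q)^{m} q^{(1-2s)g}$, while $R_{\le N}$ is constant and no closed form is needed for the derivatives $\delta^{(b)}(s)$. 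Hence $\frac{\d^{\mu}}{\d s^{\mu}} P_{\le g}(s) = (\ln q)^{\mu}\sum_{n=0}^{g}(-n)^{\mu} A_{n}(D) q^{-ns}$ and $\frac{\d^{\mu}}{\d s^{\mu}}(-q^{-(g+1)s} R_{\le g}) = -(-(g+1)\ln q)^{\mu} q^{-(g+1)s} R_{\le g}$; dividing by $(-\ln q)^{\mu}$ produces the first two terms of the claimed identity.

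For the third summand I would use the trinomial Leibniz rule on the product $q^{(1-2s)g}\cdot\delta(s)\cdot Q_{\le g-1}(s)$, which gives
\begin{align*}
\frac{\d^{\mu}}{\d s^{\mu}}\Big( q^{(1-2s)g}\delta(s) Q_{\le g-1}(s)\Big)
= \sum_{a+b+c=\mu}\frac{\mu!}{a!\,b!\,c!}(-2g\ln q)^{a}\,\delta^{(b)}(s)\,(\ln q)^{c}\sum_{n=0}^{g-1} n^{c} A_{n}(D) q^{n(s-1)},
\end{align*}
and for the fourth summand the ordinary Leibniz rule on $q^{-gs}\cdot\delta(s)\cdot R_{\le g-1}$ gives $-\sum_{m=0}^{\mu}\binom{\mu}{m}(-g\ln q)^{\mu-m}\delta^{(m)}(s) R_{\le g-1}$. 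Dividing each term by $(-\ln q)^{\mu} = (-\ln q)^{a+b+c}$ (respectively $(-\ln q)^{(\mu-m)+m}$) and simplifying via $(-2g\ln q)^{a}(\ln q)^{c}/(-\ln q)^{a+b+c} = (2g)^{a}(-1)^{b+c}/(\ln q)^{b}$, then regrouping $(-1)^{b}/(\ln q)^{b} = 1/(-\ln q)^{b}$ and $(-1)^{c} n^{c} = (-n)^{c}$ — and likewise $(-g\ln q)^{\mu-m}/(-\ln q)^{\mu} = g^{\mu-m}/(-\ln q)^{m}$ — collapses these into exactly the third and fourth sums in the statement. This establishes the formula for $L^{(\mu)}(s,\chi_{D})/(-\ln q)^{\mu}$, and the displayed special case follows by putting $s = \tfrac12$, using $q^{(1-2s)g}=1$, $q^{-ns}=q^{-n/2}=q^{n(s-1)}$, $q^{-(g+1)s}=q^{-(g+1)/2}$ and $q^{-gs}=q^{-g/2}$.

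There is no real obstacle here; the entire argument is a finite application of the (multinomial) Leibniz rule. The one place demanding attention is the sign bookkeeping in the expansion of the third term: the powers of $-1$ arising from differentiating $q^{(1-2s)g}$ and $Q_{\le g-1}(s)$ and from the $(-\ln q)^{-\mu}$ normalisation must be recombined so that they land on the $\delta^{(b)}$ factor as $(-\ln q)^{-b}$ and on $n^{c}$ as $(-n)^{c}$; once the index substitution $a+b+c=\mu$ is in place this is purely mechanical.
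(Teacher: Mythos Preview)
Your argument is correct and follows the same approach as the paper: split $L(s,\chi_D)$ into the four summands of \eqref{LsD-even} (the paper calls them $L_1,\dots,L_4$, you call them $P_{\le g},\,-q^{-(g+1)s}R_{\le g},\,q^{(1-2s)g}\delta Q_{\le g-1},\,-q^{-gs}\delta R_{\le g-1}$), apply the Leibniz rule to each, and normalise by $(-\ln q)^{\mu}$. Two cosmetic slips: in your displayed trinomial expansion for the third summand you dropped the factor $q^{(1-2s)g}$, and in the fourth you dropped $q^{-gs}$; both are clearly just omissions, since you recorded the derivatives of these exponentials a few lines earlier and the factors reappear in your final simplification.
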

\begin{proof}
By \eqref{LsD-even}, we can write
$$
L(s,\chi_{D}) = L_{1}(s) - L_{2}(s) + L_{3}(s) - L_{4}(s),
$$
where
\begin{align*}
L_{1}(s) &= \sum_{n=0}^{g} A_{n}(D)q^{-ns}, \\
L_{2}(s) &= q^{-(g+1)s} \sum_{n=0}^{g} A_{n}(D),  \\
L_{3}(s) &= q^{(1-2s)g} \delta(s)\sum_{n=0}^{g-1} A_{n}(D) q^{n(s-1)}, \\
L_{4}(s) &= q^{-gs} \delta(s)\sum_{n=0}^{g-1} A_{n}(D).
\end{align*}
For any integer $\mu \ge 0$, we have
\begin{align*}
\frac{L_{1}^{(\mu)}(s)}{(-\ln q)^{\mu}} &= \sum_{n=0}^{g} n^{\mu} A_{n}(D) q^{-ns}, \\
\frac{L_{2}^{(\mu)}(s)}{(-\ln q)^{\mu}} &= (g+1)^{\mu} q^{-(g+1)s} \sum_{n=0}^{g} A_{n}(D),  \\
\frac{L_{3}^{(\mu)}(s)}{(-\ln q)^{\mu}} &= q^{(1-2s)g} \sum_{a+b+c=\mu} \frac{\mu !}{a! b! c!} \frac{(2g)^{a} \delta^{(b)}(s)}{(-\ln q)^{b}} \sum_{n=0}^{g-1} (-n)^{c} A_{n}(D) q^{n(s-1)},   \\
\frac{L_{4}^{(\mu)}(s)}{(-\ln q)^{\mu}} &= q^{-gs} \sum_{m=0}^{\mu} {\mu\choose m} \frac{g^{\mu-m} \delta^{(m)}(s)}{(-\ln q)^{m}} \sum_{n=0}^{g-1} A_{n}(D).
\end{align*}
Hence, we get
\begin{align*}
\frac{L^{(\mu)}(s,\chi_{D})}{(-\ln q)^{\mu}} &= \sum_{n=0}^{g} n^{\mu} A_{n}(D) q^{-ns} - (g+1)^{\mu} q^{-(g+1)s} \sum_{n=0}^{g} A_{n}(D) \\
&\hspace{1em} + q^{(1-2s)g} \sum_{a+b+c=\mu} \frac{\mu !}{a! b! c!} \frac{(2g)^{a} \delta^{(b)}(s)}{(-\ln q)^{b}} \sum_{n=0}^{g-1} (-n)^{c} A_{n}(D) q^{n(s-1)} \\
&\hspace{1em} - q^{-gs} \sum_{m=0}^{\mu} {\mu\choose m} \frac{g^{\mu-m} \delta^{(m)}(s)}{(-\ln q)^{m}} \sum_{n=0}^{g-1} A_{n}(D).
\end{align*}
In particular, for $s=\frac{1}2$, it follows that
\begin{align*}
\frac{L^{(\mu)}(\tfrac{1}2,\chi_{D})}{(-\ln q)^{\mu}} &=  \sum_{n=0}^{g} n^{\mu} A_{n}(D) q^{-\frac{n}{2}} - (g+1)^{\mu} q^{-\frac{g+1}{2}} \sum_{n=0}^{g} A_{n}(D) \\
&\hspace{1em} + \sum_{a+b+c=\mu} \frac{\mu !}{a! b! c!} \frac{(2g)^{a} \delta^{(b)}(\frac{1}2)}{(-\ln q)^{b}} \sum_{n=0}^{g-1} (-n)^{c} A_{n}(D) q^{-\frac{n}{2}} \\
&\hspace{1em} - q^{-\frac{g}{2}} \sum_{m=0}^{\mu} {\mu\choose m} \frac{g^{\mu-m} \delta^{(m)}(\tfrac{1}{2})}{(-\ln q)^{m}} \sum_{n=0}^{g-1} A_{n}(D).
\end{align*}
\end{proof}

Write
\begin{align*}
\mc S_{h,m}^{\rm e}(\mu) = \sum_{n=0}^{h} n^{m} q^{-\frac{n}2}\sum_{f\in\A_{n}^{+}} \sum_{D\in\mc H_{2g+2}}\chi_{D}(f)
\end{align*}
and
\begin{align*}
{\mc T}_{h}(\mu) = q^{-\frac{h+1}{2}}\sum_{n=0}^{h}\sum_{f\in\A_{n}^{+}} \sum_{D\in\mc H_{2g+2}}\chi_{D}(f)
\end{align*}
for $h\in\{g-1, g\}$ and $m\in\{0,1,\ldots, \mu\}$.
Then, by \eqref{eq-derk-eee}, we can write
\begin{align*}
\sum_{D\in\mc H_{2g+2}}\frac{L^{(\mu)}(\tfrac{1}2,\chi_{D})}{(-\ln q)^{\mu}}
&= \mc S_{g,\mu}^{\rm e}(\mu) - (g+1)^{\mu} {\mc T}_{g}(\mu) \\
&\hspace{1em}+ \sum_{a+b+c=\mu} \frac{(-1)^{c}\mu !}{a! b! c!} \frac{(2g)^{a}\delta^{(b)}(\tfrac{1}{2})}{(-\ln q)^{b}}{\mc S}_{g-1,c}^{\rm e}(\mu) \\
&\hspace{1em}- \sum_{m=0}^{\mu} {\mu\choose m} \frac{g^{\mu-m}\delta^{(m)}(\tfrac{1}{2})}{(-\ln q)^{m}} {\mc T}_{g-1}(\mu).
\end{align*}

\subsection{Averaging $\mc S_{h,m}^{\rm e}(\mu)$}
In this subsection we obtain an asymptotic formula of $\mc S_{h,m}^{\rm e}(\mu)$.

\begin{prop}
For $h\in\{g-1, g\}$ and $m\in\{0,1,\ldots, \mu\}$, we have
\begin{align}\label{mcShm-063}
\mc S_{h,m}^{\rm e}(\mu)
&= \frac{2^{m}|D|}{\zeta_{\A}(2)} \left(G(1) J_{m}([\tfrac{h}2]) + \frac{G^{(m)}(1)}{(-\ln q)^{m}}\right) \nonumber  \\
&\hspace{1em}- \frac{2^{m}|D|}{\zeta_{\A}(2)}\sum_{a=1}^{m+1} j_{m}(a)\frac{G^{(a)}(1)}{(-\ln q)^{a}} + O(|D|^{\frac{7}8} (\log_{q}|D|)^{m}).
\end{align}
\end{prop}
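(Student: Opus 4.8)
The plan is to mimic exactly the proof of the odd-degree analogue (Proposition \ref{Shm-053}), since the only difference between $\mc S_{h,m}^{\rm e}(\mu)$ and $\mc S_{h,m}^{\rm o}(\mu)$ is that the inner character sum runs over $D\in\mc H_{2g+2}$ rather than $D\in\mc H_{2g+1}$, and all the auxiliary lemmas in Section 4 (Lemma \ref{Bound-lemma2-1-BBB}, Lemma \ref{Lem-2-2-Lem-aaa}, Lemma \ref{Lem-2-3-Lem-bbb}, Lemma \ref{Lem-2-4-Lem-ccc}, Lemma \ref{mcMhm-mu-4.5}) are stated for a general degree $n$, so they apply verbatim with $n=2g+2$. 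First I would split the sum over $f\in\A^{+}_{n}$ according to whether $f$ is a perfect square of a polynomial or not, writing $\mc S_{h,m}^{\rm e}(\mu) = \mc S_{h,m}^{\rm e}(\mu)_{\square} + \mc S_{h,m}^{\rm e}(\mu)_{\ne\square}$.

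For the non-square contribution, I would apply Lemma \ref{Bound-lemma2-1-BBB} to each inner sum $\sum_{D\in\mc H_{2g+2}}\chi_{D}(f)$, bounding it by $|D|^{1/2}|f|^{1/4}$ with $|D|=q^{2g+2}$, and then sum over $f\in\A^{+}_{n}$ and over $n\le h\le g$. The geometric-type sum $\sum_{n\le g} n^{m} q^{-n/2}\cdot q^{n}\cdot q^{n/4}$ is dominated by its top term $n=g$, giving $\ll g^{m} q^{3g/4} = O(|D|^{7/8}(\log_q|D|)^m)$, exactly as in \eqref{ns-odd}. For the square contribution, I would write $f=\ell^2$ and use Lemma \ref{Lem-2-2-Lem-aaa} (with $n=2g+2$) to replace $\sum_{D\in\mc H_{2g+2}, (D,\ell)=1} 1$ by $\frac{|D|}{\zeta_{\A}(2)}\prod_{P|\ell}(1+|P|^{-1})^{-1}$ up to an error $O(|D|^{1/2}|\ell|^{\varepsilon})$; summing these errors over $\ell$ with $\deg\ell\le [h/2]\le g/2$ and $n=2\deg\ell$ gives an acceptable $O(g^{m} q^{g\varepsilon}|D|^{1/2})$, which is absorbed into $O(|D|^{7/8}(\log_q|D|)^m)$. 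The main term then becomes exactly $\mc M_{h,m}(\mu)$ as defined in Lemma \ref{mcMhm-mu-4.5} (note $q^{-n/2}=q^{-\deg\ell}$ when $f=\ell^2$), and invoking Lemma \ref{mcMhm-mu-4.5} yields the displayed main terms together with an error $O(g^{m} q^{-g/2}|D|)$, again smaller than the claimed error.

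Finally I would combine the two contributions: the main term is $\mc M_{h,m}(\mu)$ expanded via Lemma \ref{mcMhm-mu-4.5}, and the total error is $O(|D|^{7/8}(\log_q|D|)^m) + O(g^{m}q^{g\varepsilon}|D|^{1/2}) + O(g^{m}q^{-g/2}|D|) = O(|D|^{7/8}(\log_q|D|)^m)$, which gives \eqref{mcShm-063}. There is essentially no new obstacle here; the proof is word-for-word the same as that of Proposition \ref{Shm-053}, with $\mc H_{2g+1}$ replaced by $\mc H_{2g+2}$ and $|D|=q^{2g+2}$, because Lemmas \ref{Bound-lemma2-1-BBB}, \ref{Lem-2-2-Lem-aaa} and \ref{mcMhm-mu-4.5} are all insensitive to the parity of the degree. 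The only point requiring a moment of care is to confirm that the error terms coming from the even-degree family are still of the same size relative to $|D|=q^{2g+2}$ as they were relative to $q^{2g+1}$ — which they are, since all the relevant exponents scale the same way.
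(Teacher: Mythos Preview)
Your proposal is correct and follows essentially the same approach as the paper's own proof: split into square and non-square $f$, bound the non-square part via Lemma \ref{Bound-lemma2-1-BBB}, reduce the square part via Lemma \ref{Lem-2-2-Lem-aaa} to $\mc M_{h,m}(\mu)$, and evaluate that by Lemma \ref{mcMhm-mu-4.5}. The only cosmetic slip is in your non-square estimate, where the displayed bound $g^{m}q^{3g/4}$ should be multiplied by the $|D|^{1/2}$ factor you already pulled out before equating it to $O(|D|^{7/8}(\log_q|D|)^m)$; the conclusion is unaffected.
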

\begin{proof}
We can write $\mc S_{h,m}^{\rm e}(\mu) = \mc S_{h,m}^{\rm e}(\mu)_{\square} + \mc S_{h,m}^{\rm e}(\mu)_{\ne\square}$, where
\begin{align}\label{mcShm-square-063}
\mc S_{h,m}^{\rm e}(\mu)_{\square} = \sum_{n=0}^{h} n^{m} q^{-\frac{n}{2}} \sum_{\substack{f\in\A^{+}_{n}\\ f=\square}} \sum_{D\in\mc H_{2g+2}} \chi_{D}(f)
\end{align}
and
\begin{align}\label{mcShm-nsquare-064}
\mc S_{h,m}^{\rm e}(\mu)_{\ne\square} = \sum_{n=0}^{h} n^{m} q^{-\frac{n}{2}} \sum_{\substack{f\in\A^{+}_{n}\\ f\ne\square}} \sum_{D\in\mc H_{2g+2}} \chi_{D}(f).
\end{align}
For the contribution of non-squares, from \eqref{mcShm-nsquare-064} by using Lemma \ref{Bound-lemma2-1-BBB}, we have
\begin{align}\label{nsquare-056}
|\mc S_{h,m}^{\rm e}(\mu)_{\ne\square}|
&\ll \sum_{n=0}^{h} n^{m} q^{-\frac{n}2} \sum_{\substack{f\in\A^{+}_{n}\\ f\ne\square}} \bigg|\sum_{D\in\mc H_{2g+2}} \chi_{D}(f)\bigg|\nonumber \\
&\ll \sum_{n=0}^{h} n^{m} q^{-\frac{n}2} \sum_{f\in\A^{+}_{n}}|D|^{\frac{1}2} |f|^{\frac{1}4}\nonumber \\
&\ll |D|^{\frac{7}8} (\log_{q}|D|)^{m}.
\end{align}
Now, we consider the contribution of squares.
From \eqref{mcShm-square-063}, by using Lemma 4.2, we get
\begin{align}\label{mcShm-square-066}
\mc S_{h,m}^{\rm e}(\mu)_{\square}
= \sum_{n=0}^{h} n^{m} q^{-\frac{n}2} \sum_{\substack{f\in\A^{+}_{n}\\ f=\square}} \sum_{\substack{D\in\mc H_{2g+2}\\ (D,f)=1}} 1
= \mc M_{h,m}(\mu) + O(g^{m} q^{g\varepsilon} |D|^{\frac{1}2}),
\end{align}
where
\begin{align*}
\mc M_{h,m}(\mu) = \frac{2^{m}|D|}{\zeta_{\A}(2)}\sum_{\ell=0}^{[\frac{h}2]} \ell^{m} q^{-\ell}\sum_{L\in\A^{+}_{\ell}}\prod_{P|L}(1+|P|^{-1})^{-1}.
\end{align*}
By Lemma \ref{mcMhm-mu-4.5}, we have
\begin{align}\label{mcMhm-mu-6.7}
\mc M_{h,m}(\mu) &= \frac{2^{m}|D|}{\zeta_{\A}(2)} \left(G(1) J_{m}([\tfrac{h}2]) + \frac{G^{(m)}(1)}{(-\ln q)^{m}}\right) \nonumber  \\
&\hspace{1em}- \frac{2^{m}|D|}{\zeta_{\A}(2)}\sum_{a=1}^{m+1} j_{m}(a)\frac{G^{(a)}(1)}{(-\ln q)^{a}} + O(g^{m} q^{-\frac{g}2}|D|).
\end{align}
By inserting \eqref{mcMhm-mu-6.7} into \eqref{mcShm-square-066}, it follows that
\begin{align}\label{mcShm-square-068}
\mc S_{h,m}^{\rm e}(\mu)_{\square}
&= \frac{2^{m}|D|}{\zeta_{\A}(2)} \left(G(1) J_{m}([\tfrac{h}2]) + \frac{G^{(m)}(1)}{(-\ln q)^{m}}\right) \nonumber  \\
&\hspace{1em}- \frac{2^{m}|D|}{\zeta_{\A}(2)}\sum_{a=1}^{m+1} j_{m}(a)\frac{G^{(a)}(1)}{(-\ln q)^{a}} + O(g^{m} q^{-\frac{g}2}|D|).
\end{align}
Finally, combining \eqref{nsquare-056} and \eqref{mcShm-square-068}, we obtain the result.
\end{proof}

\subsection{Averaging ${\mc T}_{h}(\mu)$}
In this subsection we obtain an asymptotic formula of ${\mc T}_{h}(\mu)$.

\begin{lem}
For $h\in\{g-1, g\}$, we have
\begin{align}\label{lem6.3-eq}
{\mc T}_{h}(\mu) = G(1) |D| q^{[\frac{h}2]-\frac{h+1}{2}} + O(|D|^{\frac{7}8} (\log_{q}|D|)).
\end{align}
\end{lem}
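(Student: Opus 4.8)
The plan is to follow the same strategy used in the proof of the preceding proposition on $\mc S^{\rm e}_{h,m}(\mu)$, namely to split the inner sum over $f$ according to whether $f$ is a perfect square, writing $\mc T_h(\mu) = \mc T_h(\mu)_\square + \mc T_h(\mu)_{\ne\square}$, where
\begin{align*}
\mc T_h(\mu)_\square = q^{-\frac{h+1}{2}} \sum_{n=0}^{h} \sum_{\substack{f\in\A^+_n\\ f=\square}} \sum_{D\in\mc H_{2g+2}} \chi_D(f), \qquad \mc T_h(\mu)_{\ne\square} = q^{-\frac{h+1}{2}} \sum_{n=0}^{h} \sum_{\substack{f\in\A^+_n\\ f\ne\square}} \sum_{D\in\mc H_{2g+2}} \chi_D(f).
\end{align*}

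For the non-square part I would apply Lemma \ref{Bound-lemma2-1-BBB} termwise and use $\sum_{f\in\A^+_n}|f|^{1/4} = q^{5n/4}$, obtaining
\begin{align*}
|\mc T_h(\mu)_{\ne\square}| \ll q^{-\frac{h+1}{2}} |D|^{\frac12} \sum_{n=0}^{h} q^{\frac{5n}{4}} \ll |D|^{\frac12} q^{\frac{3h}{4}-\frac12}.
\end{align*}
Since $h\le g$ and $|D| = q^{2g+2}$ forces $q^g = |D|^{1/2}q^{-1}$, the right-hand side is $\ll |D|^{\frac12}\,|D|^{\frac38} = |D|^{\frac78}$; this is the dominant error term.

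For the square part, writing $f = L^2$ with $L \in \A^+_{n/2}$ (so only even $n$ contribute) and applying Lemma \ref{Lem-2-2-Lem-aaa} with a small $\varepsilon>0$, the main term of the inner count is $\frac{|D|}{\zeta_{\A}(2)}\prod_{P|L}(1+|P|^{-1})^{-1}$ summed over $\deg L = 0,\dots,[\tfrac h2]$ (using $\prod_{P|L^2}=\prod_{P|L}$), which is exactly $\mc N_h(\mu)$; the accumulated error is $O\big(q^{-\frac{h+1}{2}}|D|^{\frac12}q^{[\frac h2](1+2\varepsilon)}\big) = O(|D|^{\frac12+\varepsilon})$. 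Lemma \ref{mcMhm-mu-4.6} then gives $\mc N_h(\mu) = G(1)|D|q^{[\frac h2]-\frac{h+1}{2}} + O(g|D|^{\frac34})$.

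Finally I would collect the three error contributions: using $g\ll\log_q|D|$, both $O(g|D|^{3/4})$ and $O(|D|^{1/2+\varepsilon})$ are absorbed into $O(|D|^{7/8}\log_q|D|)$, which yields the stated formula. The only point requiring care — more bookkeeping than genuine difficulty — is to run the non-square estimate uniformly for $h\in\{g-1,g\}$ while converting the powers of $q^g$ into powers of $|D|$, so that the exponent $\tfrac78$ comes out correctly; everything else is routine.
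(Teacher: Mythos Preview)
Your proposal is correct and follows essentially the same route as the paper: split into square and non-square $f$, bound the non-square part via Lemma~\ref{Bound-lemma2-1-BBB}, reduce the square part to $\mc N_h(\mu)$ via Lemma~\ref{Lem-2-2-Lem-aaa}, and then invoke Lemma~\ref{mcMhm-mu-4.6}. Your non-square estimate is in fact marginally sharper than the paper's (you get $O(|D|^{7/8})$ without the $\log_q|D|$ factor), but this makes no difference since the stated error term absorbs it either way.
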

\begin{proof}
We can write ${\mc T}_{h}(\mu) = {\mc T}_{h}(\mu)_{\square} + {\mc T}_{h}(\mu)_{\ne\square}$, where
\begin{align}\label{mcThmu-square-610}
{\mc T}_{h}(\mu)_{\square} = q^{-\frac{h+1}{2}}\sum_{n=0}^{h} \sum_{\substack{f\in\A^{+}_{n}\\ f=\square}} \sum_{D\in\mc H_{2g+2}} \chi_{D}(f)
\end{align}
and
\begin{align}\label{mcThmu-nsquare-611}
{\mc T}_{h}(\mu)_{\ne\square} = q^{-\frac{h+1}{2}}\sum_{n=0}^{h} \sum_{\substack{f\in\A^{+}_{n}\\ f\ne\square}} \sum_{D\in\mc H_{2g+2}} \chi_{D}(f).
\end{align}
For the contribution of non-squares, from \eqref{mcThmu-nsquare-611} by using Lemma \ref{Bound-lemma2-1-BBB}, we have
\begin{align}\label{mcThmu-nsquare-612}
|{\mc T}_{h}(\mu)_{\ne\square}|
&\ll q^{-\frac{h+1}{2}}\sum_{n=0}^{h} \sum_{\substack{f\in\A^{+}_{n}\\ f\ne\square}} \bigg|\sum_{D\in\mc H_{2g+2}} \chi_{D}(f)\bigg| \nonumber  \\
&\ll q^{-\frac{h+1}{2}}\sum_{n=0}^{h} \sum_{f\in\A^{+}_{n}}|D|^{\frac{1}2} |f|^{\frac{1}4} \ll |D|^{\frac{7}8} (\log_{q}|D|).
\end{align}
Now, we consider the contribution of squares.
From \eqref{mcThmu-square-610}, by using Lemma \ref{Lem-2-2-Lem-aaa}, we get
\begin{align}\label{eq6-13eqq}
{\mc T}_{h}(\mu)_{\square} = q^{-\frac{h+1}{2}}\sum_{n=0}^{h} \sum_{\substack{f\in\A^{+}_{n}\\ f=\square}} \sum_{\substack{D\in\mc H_{2g+2}\\ (D,f)=1}} 1
= \mc N_{h}(\mu) + O(g q^{g\varepsilon} |D|^{\frac{1}2}),
\end{align}
where
$$
\mc N_{h}(\mu) = \frac{|D|}{\zeta_{\A}(2)}q^{-\frac{h+1}{2}}\sum_{\ell=0}^{[\frac{h}2]} \sum_{L\in\A^{+}_{\ell}}\prod_{P|L}(1+|P|^{-1})^{-1}.
$$
By Lemma \ref{mcMhm-mu-4.6}, we have
\begin{align}\label{eq6-14eqq}
\mc N_{h}(\mu) = G(1) |D| q^{[\frac{h}2]-\frac{h+1}{2}} + O(g|D|^{\frac{3}{4}}).
\end{align}
By inserting \eqref{eq6-14eqq} into \eqref{eq6-13eqq}, we get that
\begin{align}\label{eq6-15eqq}
{\mc T}_{h}(\mu)_{\square} = G(1) |D| q^{[\frac{h}2]-\frac{h+1}{2}} + O(g|D|^{\frac{3}{4}}).
\end{align}
Finally, combining \eqref{eq6-13eqq} and \eqref{eq6-15eqq}, we obtain the result.
\end{proof}

\subsection{Completing the Proof}
Recall that
\begin{align}\label{eq6-17eq}
\sum_{D\in\mc H_{2g+2}}\frac{L^{(\mu)}(\tfrac{1}2,\chi_{D})}{(-\ln q)^{\mu}}
&= \mc S_{g,\mu}^{\rm e}(\mu) - (g+1)^{\mu} {\mc T}_{g}(\mu) \nonumber\\
&\hspace{1em}+ \sum_{a+b+c=\mu} \frac{(-1)^{c}\mu !}{a! b! c!} \frac{(2g)^{a}\delta^{(b)}(\tfrac{1}{2})}{(-\ln q)^{b}}{\mc S}_{g-1,c}^{\rm e}(\mu)\nonumber\\
&\hspace{1em}- \sum_{m=0}^{\mu} {\mu\choose m} \frac{g^{\mu-m}\delta^{(m)}(\tfrac{1}{2})}{(-\ln q)^{m}} {\mc T}_{g-1}(\mu).
\end{align}
Then, by \eqref{mcShm-063}, we have
\begin{align}\label{mcShm-618}
\mc S_{g,\mu}^{\rm e}(\mu)
&= \frac{2^{\mu}|D|}{\zeta_{\A}(2)} \left(G(1) J_{\mu}([\tfrac{g}2]) + \frac{G^{(\mu)}(1)}{(-\ln q)^{\mu}}\right) \nonumber  \\
&\hspace{1em}- \frac{2^{\mu}|D|}{\zeta_{\A}(2)}\sum_{n=1}^{\mu+1} j_{\mu}(n)\frac{G^{(n)}(1)}{(-\ln q)^{n}} + O(|D|^{\frac{7}8} (\log_{q}|D|)^{\mu})
\end{align}
and
\begin{align}\label{mcShm-619}
&\sum_{a+b+c=\mu} \frac{(-1)^{c}\mu !}{a! b! c!} \frac{(2g)^{a}\delta^{(b)}(\tfrac{1}{2})}{(-\ln q)^{b}}{\mc S}_{g-1,c}^{\rm e}(\mu) \nonumber  \\
&\hspace{2em}= \frac{2^{\mu}|D|}{\zeta_{\A}(2)}\sum_{a+b+c=\mu} \frac{(-1)^{c}\mu !}{a! b! c!} \frac{g^{a}\delta^{(b)}(\tfrac{1}{2})}{(-2\ln q)^{b}}
\left(G(1) J_{c}([\tfrac{g-1}2]) + \frac{G^{(c)}(1)}{(-\ln q)^{c}}\right) \nonumber  \\
&\hspace{3em} - \frac{2^{\mu}|D|}{\zeta_{\A}(2)}\sum_{a+b+c=\mu} \frac{(-1)^{c}\mu !}{a! b! c!} \frac{g^{a}\delta^{(b)}(\tfrac{1}{2})}{(-2\ln q)^{b}}
\sum_{n=1}^{c+1} j_{c}(n)\frac{G^{(n)}(1)}{(-\ln q)^{n}} \nonumber  \\
&\hspace{3em}+ O(|D|^{\frac{7}8} (\log_{q}|D|)^{\mu}).
\end{align}
Now, by \eqref{lem6.3-eq}, we also have
\begin{align}\label{mcShm-620}
(g+1)^{\mu} {\mc T}_{g}(\mu) = (g+1)^{\mu} G(1) |D| q^{[\frac{g}2]-\frac{g+1}{2}} + O(|D|^{\frac{7}8} (\log_{q}|D|)^{\mu})
\end{align}
and
\begin{align}\label{mcShm-621}
&\sum_{m=0}^{\mu} {\mu\choose m} \frac{g^{\mu-m}\delta^{(m)}(\tfrac{1}{2})}{(-\ln q)^{m}} {\mc T}_{g-1}(\mu)  \nonumber  \\
&\hspace{1em}= G(1) |D| q^{[\frac{g-1}2]-\frac{g}{2}}\sum_{m=0}^{\mu} {\mu\choose m} \frac{g^{\mu-m}\delta^{(m)}(\tfrac{1}{2})}{(-\ln q)^{m}}
+ O(|D|^{\frac{7}8} (\log_{q}|D|)^{\mu}).
\end{align}
By inserting \eqref{mcShm-618}, \eqref{mcShm-619}, \eqref{mcShm-620} and \eqref{mcShm-621} into \eqref{eq6-17eq}, we complete the proof.

\section{A Remark on the derivatives of $G(s)$}
Recall that
\begin{align*}
G(s) = \sum_{L\in\A^{+}}\frac{\mu(L)}{|L|^{s}\prod_{P|L}(1+|P|)}.
\end{align*}
By expressing $G(s)$ as an Euler product, we obtain that
\begin{align}\label{eq-GS-eqq}
G(s) = \prod_{P}\left(1 - \frac{1}{|P|^{s}(1+|P|)}\right).
\end{align}
Taking logarithmic derivative in \eqref{eq-GS-eqq}, we have
\begin{align}\label{Der-1-Der}
\frac{G'(s)}{-\ln q} = G(s) H_{1}(s),
\end{align}
where
$$
H_{1}(s) = \sum_{P}\frac{\deg(P)}{1-|P|^{s}(|P|+1)}.
$$
For any integer $n\ge 1$, let
$$
H_{n+1}(s) = \frac{H'_{n}(s)}{-\ln q}.
$$
Let $\mc G$ be the set of finite sums of finite product of $G(s), H_{1}(s), H_{2}(s), \ldots$.
Define an operator $\varphi$ by $\varphi(G(s)) = G(s) H_{1}(s)$ and $\varphi(H_{n}(s)) = H_{n+1}(s)$ for any $n\ge 1$.
Extend $\varphi$ to $\mc G$ by the rules that $\varphi(A+B) = \varphi(A) + \varphi(B)$ and $\varphi(AB) = \varphi(A)B+A\varphi(B)$ for any $A, B \in \mc G$.
For example,
\begin{align*}
\varphi(G(s)) &= G(s) H_{1}(s) = \frac{G'(s)}{-\ln q}, \\
\varphi^{2}(G(s)) &= G(s) \left(H_{1}(s)^2 + H_{2}(s)\right) = \frac{G''(s)}{(-\ln q)^2}, \\
\varphi^{3}(G(s)) &= G(s)(H_{1}(s)^3 + 3 H_{1}(s) H_{2}(s) + H_{3}(s)) = \frac{G^{(3)}(s)}{(-\ln q)^3}.
\end{align*}
Inductively, we can show that
\begin{align}
\varphi^{n}(G(s)) = \frac{G^{(n)}(s)}{(-\ln q)^n}
\end{align}
for any integer $n\ge 1$.

For any monic irreducible polynomial $P$, let $f_{P}(s) = |P|^s(|P|+1)$.
We present below the function $H_{n}(s)$ for a few values of $n$ as this illustratesthe complexity of this function.
\begin{align*}
H_{1}(s) &= \sum_{P}\frac{\deg(P)}{1-f_{P}(s)},  \\
H_{2}(s) &= - \sum_{P} \frac{\deg(P)^2 f_{P}(s)}{(1 - f_{P}(s))^2}, \\
H_{3}(s) &= \sum_{P} \frac{\deg(P)^3 f_{P}(s)(1+ f_{P}(s))}{(1 - f_{P}(s))^3}, \\
H_{4}(s) &=  - \sum_{P} \frac{\deg(P)^4 f_{P}(s)(1+4f_{P}(s)+f_{P}(s)^2)}{(1 - f_{P}(s))^4}, \\
H_{5}(s) &= \sum_{P} \frac{\deg(P)^5 f_{P}(s)(1+ 11f_{P}(s)+ 11f_{P}(s)^2+f_{P}(s)^3)}{(1 - f_{P}(s))^5}.
\end{align*}






\begin{bibdiv}
\begin{biblist}

\bib{A1}{article}{
   author={Andrade, Julio},
   title={Mean values of derivatives of $L$-functions in function fields: II},
   journal={J. Number Theory},
   volume={183},
   date={2018},
   pages={24--39},
}

\bib{A2}{article}{
   author={Andrade, Julio},
   title={Mean values of derivatives of $L$-functions in function fields: III},
   journal={to appear in the Proc. Roy. Soc. Edinburgh Sect. A (2017)},
   date={2017},
}

\bib{And2}{article}{
   author={Andrade, Julio},
   title={A simple proof of the mean value of $|K_{2}(\mathcal{O})|$ in function fields},
   journal={C. R. Math. Acad. Sci. Paris},
   volume={353},
   date={2015},
   number={8},
   pages={677--682},
}

\bib{And3}{article}{
   author={Andrade, Julio},
   title={Rudnick and Soundararajan's theorem for function fields},
   journal={Finite Fields Appl.},
   volume={37},
   date={2016},
   pages={311--327},
}

\bib{And-Kea}{article}{
   author={Andrade, J. C.},
   author={Keating, J. P.},
   title={The mean value of $L(\tfrac{1}{2},\chi)$ in the hyperelliptic ensemble},
   journal={J. Number Theory},
   volume={132},
   date={2012},
   number={12},
   pages={2793--2816},
}

\bib{AR}{article}{
   author={Andrade, Julio},
   author={Rajagopal, Surajit},
   title={Mean values of derivatives of $L$-functions in function fields: I},
   journal={J. Math. Anal. Appl.},
   volume={443},
   date={2016},
   number={1},
   pages={526--541},
}

\bib{Con4}{article}{
   author={Conrey, J. B.},
   title={The fourth moment of derivatives of the Riemann zeta-function},
   journal={Quart. J. Math. Oxford Ser. (2)},
   volume={39},
   date={1988},
   number={153},
   pages={21--36},
}

\bib{CRS}{article}{
   author={Conrey, J. B.},
   author={Rubinstein, M. O.},
   author={Snaith, N. C.},
   title={Moments of the derivative of characteristic polynomials with an application to the Riemann zeta function},
   journal={Comm. Math. Phys.},
   volume={267},
   date={2006},
   number={3},
   pages={611--629},
}

\bib{Fai-Rud}{article}{
   author={Faifman, Dmitry},
   author={Rudnick, Ze\'ev},
   title={Statistics of the zeros of zeta functions in families of hyperelliptic curves over a finite field},
   journal={Compos. Math.},
   volume={146},
   date={2010},
   number={1},
   pages={81--101},
}

\bib{Flo}{article}{
   author={Florea, Alexandra M.},
   title={Improving the error term in the mean value of $L(\tfrac{1}{2},\chi)$ in the hyperelliptic ensemble},
   journal={Int. Math. Res. Not. IMRN},
   date={2017},
   number={20},
   pages={6119--6148},
}

\bib{Gon}{article}{
   author={Gonek, S. M.},
   title={Mean values of the Riemann zeta function and its derivatives},
   journal={Invent. Math.},
   volume={75},
   date={1984},
   number={1},
   pages={123--141},
}

\bib{HR}{article}{
   author={Hoffstein, Jeffrey},
   author={Rosen, Michael},
   title={Average values of $L$-series in function fields},
   journal={J. Reine Angew. Math.},
   volume={426},
   date={1992},
   pages={117--150},
}

\bib{Ing}{article}{
   author={Ingham, A. E.},
   title={Mean-Value Theorems in the Theory of the Riemann Zeta-Function},
   journal={Proc. London Math. Soc. (2)},
   volume={27},
   date={1927},
   number={4},
   pages={273--300},
}

\bib{Jun}{article}{
   author={Jung, Hwanyup},
   title={Note on the mean value of $L(\tfrac{1}{2},\chi)$ in the hyperelliptic ensemble},
   journal={J. Number Theory},
   volume={133},
   date={2013},
   number={8},
   pages={2706--2714},
}

\bib{Jut}{article}{
   author={Jutila, M.},
   title={On the Mean Value of $L(\tfrac{1}{2},\chi)$ for Real Characters},
   journal={Analysis},
   volume={1},
   date={1981},
   number={2},
   pages={149--161},
}

\bib{Ros}{book}{
   author={Rosen, Michael},
   title={Number theory in function fields},
   series={Graduate Texts in Mathematics},
   volume={210},
   publisher={Springer-Verlag, New York},
   date={2002},
   pages={xii+358},
}

\bib{Tha}{book}{
   author={Thakur, Dinesh S.},
   title={Function field arithmetic},
   publisher={World Scientific Publishing Co., Inc., River Edge, NJ},
   date={2004},
   pages={xvi+388},
}

\bib{Wei}{book}{
   author={Weil, Andr\'e},
   title={Sur les courbes alg\'ebriques et les vari\'et\'es qui s'en d\'eduisent},
   language={French},
   series={Actualit\'es Sci. Ind., no. 1041 = Publ. Inst. Math. Univ.
   Strasbourg {\bf 7} (1945)},
   publisher={Hermann et Cie., Paris},
   date={1948},
   pages={iv+85},
}

\end{biblist}
\end{bibdiv}

\end{document}